\DeclareMathOperator*{\osc}{osc}
\newtheorem{theorem}{Theorem}[section] 
\newtheorem{proposition}[theorem]{Proposition}
\newtheorem{lemma}[theorem]{Lemma} 
\newtheorem{corollary}[theorem]{Corollary} 
\newtheorem*{gapconjecture}{Gap Conjecture}
 {\theoremstyle{remark}

}
\newcommand{\RR}{\mathbb{R}}
\newcommand{\NN}{\mathbb{N}}
\newcommand{\E}{\mathrm{e}}
\newcommand{\ep}{\varepsilon}
\begin{document} 
\title{Proof of the fundamental gap conjecture} 
\author{Ben Andrews}
\address{Centre for Mathematics and its Applications,  Australian National University, A.C.T. 0200, Australia}
\email{Ben.Andrews@maths.anu.edu.au}
\thanks{Research supported by Discovery Grant DP0985802 of the Australian Research Council}
\author{Julie Clutterbuck}
\address{Centre for Mathematics and its Applications,  Australian National University, A.C.T. 0200, Australia}
\email{Julie.Clutterbuck@maths.anu.edu.au}
\keywords{Parabolic equation, Eigenvalue estimate, spectral gap}
 \subjclass[2010]{Primary 35P15, 35J10; Secondary 35K05, 58J35}

\begin{abstract}  We prove the Fundamental Gap Conjecture, which states that the difference between the first two Dirichlet eigenvalues (the spectral gap) of a Schr\"odinger operator with convex potential and Dirichlet boundary data on a convex domain is bounded below by the spectral gap on an interval of the same diameter with zero potential.   More generally, for an arbitrary smooth potential in higher dimensions, our proof gives both a sharp lower bound for the spectral gap and a sharp modulus of concavity for the logarithm of the first eigenfunction, in terms of the diameter of the domain and a modulus of convexity for the potential.
\end{abstract}

\maketitle

 \section{Introduction}

We consider Schr\"odinger operators of the form $-\Delta+V$ with Dirichlet boundary conditions on a compact convex domain $\Omega$ in $\RR^n$.  The diameter of $\Omega$ is given by $D=\sup_{x,y\in\Omega} |y-x|$.  We assume the potential $V$ is semiconvex (i.e. $V+c|x|^2$ is convex for some $c$).  
Such an operator has an increasing sequence of eigenvalues $\lambda_0<\lambda_1\le \lambda_2\le\dots$ and corresponding eigenfunctions $\lbrace \phi_i \rbrace_{i\ge 0}$ which vanish on $\partial\Omega$ and satisfy the equation
\begin{align} \label{evalue problem 1} 
\Delta \phi_i - V\phi_i + \lambda_i \phi_i=0.
\end{align}

The difference between the first two eigenvalues, $\lambda_1-\lambda_0$, is called the \emph{fundamental gap}.  It is of importance for several reasons:   In quantum mechanics it represents the `excitation energy' required to reach the first excited state from the ground state; it thus determines the stability of the ground state and so is also important in statistical mechanics and quantum field theory.  The spectral gap also determines the rate at which positive solutions of the heat equation approach the first eigenspace, and it is through this characterization that we will prove the conjecture of the title.

It was observed by Michael van den Berg \cite{vandenBerg} that for many convex domains $\lambda_1-\lambda_0\ge 3\pi^2/D^2$.   This was also independently suggested by Ashbaugh and Benguria \cite{ashbaugh-benguria-89} and Yau \cite{MR865650}. 
\begin{gapconjecture}
Let $\Omega\subset\mathbb{R}^n$ be a bounded convex domain of diameter $D$, and $V$ a weakly convex potential.  
Then the eigenvalues of the Schr\"odinger operator
satisfy  
$$\lambda_2-\lambda_1\ge \frac{3\pi^2}{D^2}.$$
\end{gapconjecture}

 In one dimension, the conjecture is settled.     If $ \Omega$ is an interval of length $D$, the eigenvalue equation \eqref{evalue problem 1} becomes a boundary value problem for an ODE:
The eigenfunctions $\tilde{\phi}_i$ satisfy $\tilde\phi_i(\pm D/2)=0$ and
\begin{align} \label{evalue problem 2}
\tilde{\phi}_i''- \tilde V\tilde{\phi}_i +\mu_i\tilde{\phi}_i &= 0 \text{ in }(-D/2,D/2).  
\end{align}
For convenience we normalize these to have $\tilde\phi_i'(D/2)=-1$.

Ashbaugh and Benguria  \cite{ashbaugh-benguria-89} proved the conjecture under slightly different assumptions:   They considered symmetric single-well potentials, so that $\tilde{V}(x)=\tilde{V}(-x)$, and $\tilde{V}$ is non-decreasing on $[0,D/2]$.
 
\begin{theorem}[Ashbaugh--Benguria] \label{ash-ben} If $n=1$ and $\tilde{V}$ is symmetric and single-well, then the eigenvalue gap satisfies
$$\lambda_1-\lambda_0\ge \frac {3\pi^2}{D^2},$$
with equality attained when $\tilde{V}$ is constant.  
\end{theorem}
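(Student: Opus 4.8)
The plan is to pass to the \emph{ground state substitution} $u:=\tilde\phi_1/\tilde\phi_0$ and then compare, on $(-D/2,D/2)$, with the zero-potential problem of the same diameter; throughout, $\gamma$ denotes the gap $\lambda_1-\lambda_0$. First I would record the structure of the eigenfunctions. By symmetry of $\tilde V$ the ground state $\tilde\phi_0>0$ is even; $\tilde\phi_1$ must be odd, with its only zero at the origin (an even eigenfunction vanishing at $0$ vanishes identically, while an even eigenfunction not vanishing at $0$ would have two zeros, neither compatible with $\tilde\phi_1$ having a single sign change), and we normalise $\tilde\phi_1>0$ on $(0,D/2)$. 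The ``Wronskian'' $W:=\tilde\phi_1'\tilde\phi_0-\tilde\phi_1\tilde\phi_0'=\tilde\phi_0^2\,u'$ satisfies $W'=-\gamma\,\tilde\phi_1\tilde\phi_0<0$ on $(0,D/2)$ and $W(D/2)=0$, so $W>0$ there; hence $u$ is odd and strictly increasing, with $u(0)=0$ and --- by l'H\^opital, using $\tilde\phi_i'(D/2)=-1$ --- $u(D/2)=1$, $u'(D/2)=0$. Substituting $\tilde\phi_1=u\tilde\phi_0$ in \eqref{evalue problem 2} yields the self-adjoint equation
\begin{equation*}
(\tilde\phi_0^2\,u')'+\gamma\,\tilde\phi_0^2\,u=0.
\end{equation*}
Write $h:=(\log\tilde\phi_0)'$, and let $\bar\phi_0=\cos(\pi x/D)$, $\bar u=\sin(\pi x/D)$, $h_0=(\log\bar\phi_0)'=-\tfrac\pi D\tan\tfrac{\pi x}D$ be the analogues for the zero-potential problem on $(-D/2,D/2)$, whose gap equals $3\pi^2/D^2$.

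The crux is a one-sided comparison of logarithmic derivatives: $h\le h_0$ on $(0,D/2)$, with equality only when $\tilde V$ is constant --- informally, a single-well potential forces $\tilde\phi_0$ to be at least as log-concave as the model. Here I would first observe the sharp bound $\lambda_0\ge \pi^2/D^2+\tilde V(0)$, which holds because $\int(\phi')^2\ge \tfrac{\pi^2}{D^2}\int\phi^2$ and $\int\tilde V\phi^2\ge\tilde V(0)\int\phi^2$ separately for every admissible $\phi$. The Riccati identities $h'+h^2=\tilde V-\lambda_0$ and $h_0'+h_0^2=-\pi^2/D^2$ show that $w:=h-h_0$ solves the linear equation $w'+(h+h_0)\,w=\beta$ with $\beta:=\tilde V-\lambda_0+\pi^2/D^2$; by the single-well hypothesis $\beta$ is non-decreasing on $[0,D/2]$, and $\beta(0)\le 0$. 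Since $h$ and $h_0$ have the same leading singularity $-1/(D/2-x)$ at $D/2$ with matching next term, $w(0)=0$ and $w\to 0$ as $x\to D/2$. Multiplying by the integrating factor $G(x):=\exp\!\big(\int_c^x(h+h_0)\big)$ for a fixed $c\in(0,D/2)$ --- which has a finite positive limit at $0$ and tends to $0$ at $D/2$ --- gives $G(x)w(x)=\int_0^x G\beta$; the right-hand side starts at $0$, is non-increasing and then non-decreasing (because $\beta$ changes sign at most once), and returns to $0$ at $D/2$, hence is $\le 0$ throughout. Thus $w\le 0$ on $(0,D/2)$, with equality only if $\beta\equiv 0$, i.e.\ $\tilde V$ constant.

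It remains to convert $h\le h_0$ into the gap estimate. Suppose, for contradiction, $\gamma<3\pi^2/D^2$. On $(0,D/2)$ set $r:=u'/u>0$ and $\bar r:=\bar u'/\bar u=\tfrac\pi D\cot\tfrac{\pi x}D$; the equations give $r'=-r^2-2hr-\gamma$ and $\bar r'=-\bar r^2-2h_0\bar r-\tfrac{3\pi^2}{D^2}$, while $r-\bar r\to 0$ as $x\to 0^+$ (both behave like $1/x$). Subtracting and using $h-h_0\le 0$, $r>0$, $\gamma<3\pi^2/D^2$,
\begin{equation*}
(r-\bar r)'+\big(r+\bar r+2h_0\big)(r-\bar r)=-2(h-h_0)\,r+\Big(\tfrac{3\pi^2}{D^2}-\gamma\Big)\ \ge\ \tfrac{3\pi^2}{D^2}-\gamma>0.
\end{equation*}
Integrating against $E(x):=\exp\!\big(\int_c^x(r+\bar r+2h_0)\big)$ --- which vanishes like $x^2$ at $0$ and vanishes at $D/2$ since $h_0\to-\infty$ --- and using $r-\bar r\to 0$ at $0$, one gets $E(x)(r-\bar r)(x)\ge\big(\tfrac{3\pi^2}{D^2}-\gamma\big)\int_0^x E>0$; as $x\to D/2$ the prefactor $E(x)\to 0$ while the integral stays bounded below, so $(r-\bar r)(x)\to+\infty$. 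Since $\bar r\to 0$ at $D/2$ this forces $u'/u\to+\infty$ there, contradicting $u'(D/2)=0$. Hence $\gamma\ge 3\pi^2/D^2$. Tracing the equalities, the bound is an equality exactly when $h\equiv h_0$, i.e.\ when $\tilde V$ is constant, and the zero-potential computation shows it is then attained; a low-regularity single-well potential is handled by smooth approximation.

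I expect the main obstacle to be the global sign of $w=h-h_0$: the two Riccati solutions are driven by $\tilde V-\lambda_0$ and $-\pi^2/D^2$, whose difference $\beta$ is \emph{not} of one sign, so a bare comparison principle does not apply; the argument must exploit that $\beta$ is monotone, that $\beta(0)\le 0$ (which in turn rests on the sharp bound $\lambda_0\ge\pi^2/D^2+\tilde V(0)$), and the precise matching of the endpoint singularities of $h$ and $h_0$ to force $w$ to vanish at $D/2$. Once $h\le h_0$ is in hand, the remaining step is a routine Sturm-type argument. As an alternative in the spirit of the parabolic method used in this paper, one could instead evolve $\log\tilde\phi_0$ and the ratio $u$ under appropriate heat flows and propagate a modulus of log-concavity for $\log\tilde\phi_0$ together with a one-sided gradient estimate for $u$; that route also yields $\gamma\ge3\pi^2/D^2$ and is the one that generalises to higher dimensions.
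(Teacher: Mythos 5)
Your proof is essentially correct, but note first that the paper does not prove Theorem \ref{ash-ben} at all: it is quoted from Ashbaugh--Benguria \cite{ashbaugh-benguria-89}, whose original argument is variational (trial-function and rearrangement based), so the comparison is really with the paper's general machinery rather than with an internal proof. Your route is in fact a one-dimensional ODE analogue of the paper's two-step strategy: a sharp log-concavity comparison for the ground state followed by a gap estimate through the ratio $u=\tilde\phi_1/\tilde\phi_0$. Your key lemma $h=(\log\tilde\phi_0)'\le h_0=-\frac{\pi}{D}\tan\frac{\pi x}{D}$ is the $n=1$ counterpart of Theorem \ref{log concavity}, but established under the weaker single-well hypothesis (the paper's theorem, specialised to one dimension with $\tilde V=0$ as comparison, needs convexity of the potential), and your derivation of it is sound: the linear equation $w'+(h+h_0)w=\beta$ with $\beta=\tilde V-\lambda_0+\pi^2/D^2$ nondecreasing, $\beta(0)\le 0$ from $\lambda_0\ge\pi^2/D^2+\tilde V(0)$, and the integrating factor $G$ vanishing quadratically at $D/2$ does force $Gw=\int_0^xG\beta\le 0$ and hence $w\le 0$. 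The only places you should write out details are the endpoint expansions: since $\tilde\phi_i(D/2)=0$ one has $\tilde\phi_i''(D/2)=0$, so with $s=D/2-x$ both $h$ and $h_0$ equal $-\frac1s+O(s)$, giving $w\to 0$, $u(D/2)=1$, $u'(D/2)=0$, and $r-\bar r\to 0$ at $0$; these are exactly what make $Gw\to 0$ at $D/2$ and the boundary terms in your second integration vanish, and they survive under low regularity via $|\tilde\phi_0''|\le Cs$ or the approximation you mention. The second half (the Riccati comparison of $r=u'/u$ with $\bar r=\frac{\pi}{D}\cot\frac{\pi x}{D}$, integrating factor $E$ vanishing at both ends) is a valid Sturm-type argument and plays the role that Theorem \ref{thm: drift diffusion} and Proposition \ref{prop:specgap} play in the paper, where the same comparison of the ratio is achieved parabolically via the drift heat equation. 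One small caveat: your closing assertion that equality holds \emph{only} for constant $\tilde V$ is not fully justified by the sketch (tracing equality through $F=Gw$ only yields $\beta\equiv 0$ on part of the interval without further argument), but the theorem as stated claims only that equality is attained for constant $\tilde V$, which is the trivial computation $\mu_1-\mu_0=\frac{4\pi^2}{D^2}-\frac{\pi^2}{D^2}=\frac{3\pi^2}{D^2}$, so this does not affect the result.
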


Later,  Horv\'ath \cite{MR1948113} partly removed the symmetry assumption, allowing $\tilde V$ to be a single-well potential with minimum at the mid-point of the interval.  The conjecture itself (with $\tilde V$ convex)  was proved by Lavine in 1994 \cite{Lavine-gap}.  

\begin{theorem}[Lavine] The Gap Conjecture holds for $n=1$.
\end{theorem}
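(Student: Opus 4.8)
The plan is to pass to the ground state, reduce the gap estimate to a comparison of ground-state energies for an auxiliary Schr\"odinger operator on $I=(-D/2,D/2)$, and then exploit the convexity of $\tilde V$ through a maximum principle calibrated so that constant potentials realise exactly the model gap $3\pi^2/D^2$. First normalise $\tilde\phi_0>0$ and set $u=\tilde\phi_1/\tilde\phi_0$. Combining \eqref{evalue problem 2} for $i=0,1$ gives $(\tilde\phi_0^2u')'+\gamma\tilde\phi_0^2u=0$ with $\gamma:=\lambda_1-\lambda_0$, and from $\int\tilde\phi_0\tilde\phi_1=0$ together with the boundary behaviour $\tilde\phi_i\sim c_i(D/2\mp x)$ one obtains $\tilde\phi_0^2u'\to0$ at both endpoints. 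Since $\tilde\phi_1$ has a single interior zero $z$, integrating $(\tilde\phi_0^2u')'=-\gamma\tilde\phi_0^2u$ outward from each endpoint (using $u<0$ on $(-D/2,z)$ and $u>0$ on $(z,D/2)$) shows $\tilde\phi_0^2u'>0$ on $I$, so $u$ is strictly increasing.

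Next, set $q=(\log\tilde\phi_0)'$ and $y=\tilde\phi_0\,u'$. Differentiating the equation $u''+2qu'+\gamma u=0$, substituting $u'=y/\tilde\phi_0$, and using $\tilde\phi_0''=(\tilde V-\lambda_0)\tilde\phi_0$ yields
\[
 -y''+\bigl(\tilde V-2(\log\tilde\phi_0)''\bigr)\,y=(\gamma+\lambda_0)\,y \quad\text{on }I ,
\]
together with $y\to0$ at $\pm D/2$ and $y>0$ on $I$. Hence $\gamma+\lambda_0$ is the \emph{lowest} Dirichlet eigenvalue of $-\partial^2+\tilde V-2(\log\tilde\phi_0)''$, and since $\lambda_0=\mu_0(\tilde V)$ the theorem becomes equivalent to
\[
 \mu_0\bigl(\tilde V-2(\log\tilde\phi_0)''\bigr)\ \ge\ \mu_0(\tilde V)+\frac{3\pi^2}{D^2}.
\]
(For constant $\tilde V$ one has $\tilde V-2(\log\tilde\phi_0)''=\tilde V+\tfrac{2\pi^2}{D^2}\sec^2(\pi x/D)$, with ground state $\cos^2(\pi x/D)$ and eigenvalue $\tilde V+4\pi^2/D^2=\mu_0(\tilde V)+3\pi^2/D^2$: the inequality is sharp.)

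To prove this last inequality, it suffices by the comparison principle (Barta's inequality: test a positive supersolution against the actual ground state $y$) to produce a positive $g\in H_0^1(I)$ with $-g''+\bigl(\tilde V-2(\log\tilde\phi_0)''\bigr)g\ge(\lambda_0+3\pi^2/D^2)\,g$ pointwise. Writing $g=\tilde\phi_0^2\,e^{-\rho}$ and using the Riccati relation $q'+q^2=\tilde V-\lambda_0$, a short computation reduces this to the differential inequality
\[
 4q\rho'+\rho''-(\rho')^2\ \ge\ 3\Bigl(\tilde V-\lambda_0+\frac{\pi^2}{D^2}\Bigr)\quad\text{on }I .
\]
Where $\tilde V-\lambda_0\le-\pi^2/D^2$ one takes $\rho\equiv0$ (for constant $\tilde V$ this holds with equality throughout); and since $\tilde V$ is convex, the set where $\tilde V-\lambda_0>-\pi^2/D^2$ is a union of at most two intervals abutting $\pm D/2$, on which $|q|$ is large — here one also uses the classical log-concavity of $\tilde\phi_0$ for convex $\tilde V$, i.e.\ $q'\le0$, so $q\to\pm\infty$ at $\mp D/2$. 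There one chooses $\rho$ bounded, with $\rho'$ of the sign of $q$, so that the term $4q\rho'$ dominates the right-hand side while $g$ still vanishes at the endpoints.

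The main obstacle is the transition region, where $\tilde V-\lambda_0$ has only just exceeded $-\pi^2/D^2$ but $|q|$ is still moderate: there the differential inequality for $\rho$ must be solved quantitatively, balancing the convexity-controlled growth of $\tilde V$ against the Riccati bound $q'=\tilde V-\lambda_0-q^2$, and this calibration carries essentially all the weight of the argument. Two alternatives are worth noting: one may instead run the heat semigroup and propagate a two-point modulus of concavity for the logarithm of the solution against the one-dimensional model — the route taken in the body of this paper — or decompose at the node, using $\lambda_1=\mu_0^{(-D/2,z)}(\tilde V)=\mu_0^{(z,D/2)}(\tilde V)$, which is cleaner but appears to lose too much when $\max\tilde V$ is large.
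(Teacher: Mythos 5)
Your reduction steps are correct and worth having: the identity $(\tilde\phi_0^2u')'+\gamma\tilde\phi_0^2u=0$, the monotonicity of $u=\tilde\phi_1/\tilde\phi_0$, and the computation showing that $y=\tilde\phi_0 u'$ is a positive solution of $-y''+(\tilde V-2(\log\tilde\phi_0)'')y=(\lambda_0+\gamma)y$ vanishing at $\pm D/2$ all check out (I verified the Riccati bookkeeping, including the sharpness computation for constant $\tilde V$), so the gap theorem is indeed equivalent to $\mu_0\bigl(\tilde V-2(\log\tilde\phi_0)''\bigr)\ge\mu_0(\tilde V)+3\pi^2/D^2$. But the proof stops exactly where the theorem lives. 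The Barta step requires an explicit $\rho$ satisfying $4q\rho'+\rho''-(\rho')^2\ge 3(\tilde V-\lambda_0+\pi^2/D^2)$ on all of $I$ with $\tilde\phi_0^2e^{-\rho}$ still vanishing at the endpoints, and you never produce one. The heuristic you offer for the region $\{\tilde V-\lambda_0>-\pi^2/D^2\}$ --- that there $|q|$ is large so $4q\rho'$ can dominate --- fails in the critical regime: when $\tilde V$ is close to constant, $\tilde V-\lambda_0$ is close to $-\pi^2/D^2$ on essentially the whole interval, so that region can be nearly all of $I$ while $q$ remains moderate except in thin boundary layers; since the target inequality is saturated identically in the constant case, there is no slack to absorb errors, and any admissible $\rho$ must be calibrated with equality-level precision. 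You acknowledge this yourself ("carries essentially all the weight of the argument"), which is honest but means the proof is incomplete. A secondary unaddressed point: your $\rho$ is defined piecewise, and since the supersolution property involves $\rho''$, you must check the sign of the derivative jump at the gluing points (the corner of $g=\tilde\phi_0^2e^{-\rho}$ must be concave) for Barta's lower bound to survive in the distributional sense.

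For orientation: the statement you were asked about is quoted in this paper as Lavine's theorem and is not reproved here by one-dimensional ODE methods; it follows instead from Theorem \ref{spectral gap} and Corollary \ref{convex corollary}, whose proof runs the parabolic Schr\"odinger flow, establishes the sharp modulus of concavity \eqref{log concavity eqn} for $\log\phi_0$ by a two-point maximum principle (Theorem \ref{thm:log conc} and Corollary \ref{cor:lim.beh}), and converts it into exponential decay of the oscillation of $\phi_1/\phi_0$ via Theorem \ref{thm: drift diffusion}. Your commutation-type reduction (passing from the gap to the ground state of $-\partial^2+\tilde V-2(\log\tilde\phi_0)''$) is closer in spirit to Lavine's and Ashbaugh--Benguria's variational arguments, and if you want to complete it you will need a genuinely quantitative mechanism in the transition region --- for instance the careful case analysis Lavine performs at the minimizing potential --- rather than a domination argument by $4q\rho'$.
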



In higher dimensions, the conjecture has been proved for some highly symmetric cases:  Ba{\~n}uelos--M{\'e}ndez-Hern{\'a}ndez \cite{mendez-banuelos} (for $V\equiv 0$), Davis \cite{MR1821082} and 
 Ba{\~n}uelos--Kr{\"o}ger \cite{kroger-banuelos} proved it for $\Omega\subset\mathbb{R}^2$ symmetric with respect to $x$ and $y$ axes, and convex in both $x$ and $y$.

Non-optimal lower bounds have also been obtained:  In an influential paper, Singer, Wong, Yau and Yau \cite{swyy-gap-estimate} used gradient estimates similar to those developed by Li \cite{Li} and Li--Yau \cite{Li-Yau}, together with the log-concavity of the first eigenfunction (proved for convex potentials by Brascamp and Lieb \cite{MR0450480}, and also by different methods in \cite{korevaar-convex} and in \cite{swyy-gap-estimate}) to show
 the gap is at least $\frac{\pi^2}{4D^2}$.     Yu and Zhong \cite{yu-zhong-gap} improved this to $\frac{\pi^2}{D^2}$, following the work of Zhong and Yang \cite{zhong-yang} which similarly improved Li and Yau's estimate for the first nontrivial eigenvalue on a compact manifold  \cite{Li-Yau}.   Many other authors have used probabilistic arguments (including \cite{MR0450480} and \cite{kroger-banuelos}).  Such methods were used by Smits \cite{smits} to recover the estimate of Yu and Zhong.
It seems that any further improvement requires an improved estimate on the log-concavity of the first eigenfunction.  Efforts in this direction have included work by  Yau \cite{yau-gap} and  Ling \cite{ling-gap}, who prove gap estimates depending explicitly on an upper bound for $D^2(\ln \phi_0)$.   
The problem of non-convex potentials has been studied by Yau \cite{yau-nonconvex-gap}.  
 There is also considerable literature on potentials with double wells, including examples with arbitrarily small spectral gap (see for example \cite{Harrell}).
 
We should point out here that fundamental gap for the Neumann problem (with zero potential) is simply the first nontrivial eigenvalue.  A lower bound for this on a convex domain was proved by Payne and Weinberger \cite{PW}.   Zhong and Yang \cite{zhong-yang} later reproduced the same result as a special case, using rather different methods.  

We refer to the excellent survey by Ashbaugh \cite{Ashbaughsurvey} for further discussion of the fundamental gap conjecture and its history.

Our proof of the gap conjecture combines two ingredients, both of which involve analysis of the heat equation:  The first ingredient is a method for estimating the modulus of continuity of solutions of parabolic equations which was developed in our earlier papers 
\cites{andrews-clutterbuck-jde,ac:gradient-estimates-hd}.  In this paper we say that a function $\eta$ on $\RR_+$ is a \emph{modulus of continuity}  for a function $f$ on $\Omega$ if
 \begin{equation}\label{eq:mod-of-cont}
 |f(y)-f(x)|\leq 2\eta\left(\frac{|y-x|}{2}\right)
 \end{equation}
 for all $x,y\in\Omega$.  Here the factors of 2 are for later convenience.  In \cite{ac:gradient-estimates-hd}*{Theorem 4.1} we proved that if $u$ is any smooth solution of the heat equation with Neumann boundary data on a convex domain $\Omega$ of diameter $D$ in $\RR^n$, then $u(.,t)$ has modulus of continuity $\varphi(.,t)$, where $\varphi$ is the solution to the one-dimensional heat equation on $[-D/2,D/2]$ with Neumann boundary data and initial data $\varphi(z,0)=\frac12\osc u(.,0)\text{sgn}(z)$.  The proof of this uses a maximum principle argument applied to a function of $2n$ spatial variables, and crucially exploits the full $2n\times 2n$ matrix of second derivatives. In that paper we were mostly concerned with the short-time application of this estimate to deduce Lipschitz bounds on $u$, but the estimate is also useful for large $t$:  One can show that $|\varphi(x,t)|\leq C\E^{-\frac{\pi^2}{D^2}t}$, so the oscillation of $u$ decays at an exponential rate, with exponent at least $\pi^2/D^2$.  But if we choose $u$ to be the first nontrivial eigenfunction for the Neumann problem on $\Omega$, then $u(x,t)=u(x,0)\E^{-\lambda t}$, where $\lambda$ is the corresponding eigenvalue.  It follows that $\lambda\geq \frac{\pi^2}{D^2}$, and we recover the sharp estimate of Zhong and Yang \cite{zhong-yang}, at least in the case of convex domains in $\RR^n$.  In a separate work we will apply this method to more general eigenvalue estimates on manifolds.
 
 As might be expected from the proof in \cite{swyy-gap-estimate}, a second ingredient is required to apply this technique to the gap conjecture:  The log-concavity of the first eigenfunction $\phi_0$.  If we apply the method above using only the log-concavity of $\phi_0$, then we recover the estimate
 $\lambda_1-\lambda_0\geq \frac{\pi^2}{D^2}$, which is precisely the estimate of Yu and Zhong \cite{yu-zhong-gap}.  As noted above, it is clear that we must use something stronger than log-concavity to obtain a sharp result.  Previous work \cites{yau-gap,ling-gap} produces stronger results assuming positive lower bounds on the Hessian of $\log\phi_0$, but such estimates cannot yield sharp results.
 
In trying to apply the modulus of continuity argument above, we are led to a log-concavity condition involving differences of derivatives of $\log\phi_0$. 
 
 We first introduce the relevant notions in a general context:
 If $\omega$ is a real function of a positive real variable, and $X$ is a vector field on a domain $\Omega$, we say $\omega$ is a \emph{modulus of expansion} for $X$ if for every $y\neq x$ in $\Omega$ we have
 \begin{equation}\label{mod of contraction}
 \left(X(y)-X(x)\right)\cdot\frac{y-x}{|y-x|}\geq 2\omega\left(\frac{|y-x|}{2}\right).
 \end{equation}
The factors of 2 are for later convenience.  The name reflects the fact that a modulus of expansion implies a rate of increase of distances under the flow of $X$. We say $\omega$ is a \emph{modulus of contraction} for $X$ if the sign is reversed (that is, $-\omega$ is a modulus of expansion for $-X$).  If $f$ is a semi-convex function on a domain $\Omega$, we say $\omega$ is a \emph{modulus of convexity} for $f$ if $\omega$ is a modulus of expansion for the gradient vector field $\nabla f$ of $f$, and $\omega$ is a \emph{modulus of concavity} for $f$ if $\omega$ is a modulus of contraction for $\nabla f$. Note $f$ is concave (convex) if and only if zero is a modulus of concavity (convexity) for $f$. 

Our log-concavity estimates amount to an explicit modulus of concavity for $\log\phi_0$.
Precisely, we assume that the potential $\tilde V$ is even, and that the potential $V$ in equation \eqref{evalue problem 1} is `more convex' than $\tilde V$, in the sense that for any $y\neq x$ in $\Omega$ we have 
\begin{align} \label{V-convexity}
\left( \nabla V (y)- \nabla V(x) \right)\cdot \frac{(y-x)}{|y-x|}\ge 2 \tilde V'\left(\frac{|y-x|}2\right).
\end{align}
That is, we assume that $\tilde V'$ is a modulus of convexity for $V$.  
Under this assumption we prove in Theorem \ref{log concavity} that the first eigenfunction $\phi_0$ in \eqref{evalue problem 1} is `more log-concave' than the first eigenfunction $\tilde\phi_0$ in \eqref{evalue problem 2}, in the sense that
\begin{equation} \label{log concavity eqn} 
\left(\nabla\ln \phi_0 (y)-\nabla\ln \phi_0(x)\right)\cdot \frac{y-x}{|y-x|}\le 2 (\ln \tilde\phi_0)'\big|_{z=\frac{|y-x|}2}
\end{equation}
for every $y\neq x$ in $\Omega$.  That is, we prove that $(\log\tilde\phi_0)'$ is a modulus of concavity for $\log\phi_0$.
Note that if $V$ is convex, then the inequality \eqref{V-convexity} holds with $\tilde V=0$, and the modulus of concavity in \eqref{log concavity eqn} becomes $-\frac{\pi}{D}\tan\left(\frac{\pi|y-x|}{D}\right)$.
We will now list the main results. 

\begin{theorem}  \label{spectral gap} 
If $V$ and $\tilde V$ are related by \eqref{V-convexity}, then the spectral gap for $-\Delta+V$ on a convex domain $\Omega$ is bounded below by the spectral gap of the one dimensional operator $-\frac{d^2}{dx^2}+\tilde V$ on $[-D/2,D/2]$.\end{theorem}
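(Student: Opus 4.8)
The plan is to run the modulus-of-continuity scheme outlined in the introduction, now applied to the quotient of the first two eigenfunctions, with the drift term controlled by Theorem~\ref{log concavity}; writing $\mu_0<\mu_1$ for the first two Dirichlet eigenvalues in \eqref{evalue problem 2}, the spectral gap of $-\frac{d^2}{dx^2}+\tilde V$ on $[-D/2,D/2]$ is $\mu_1-\mu_0$, so the goal is to prove $\lambda_1-\lambda_0\ge\mu_1-\mu_0$. First I would pass to a parabolic problem by setting $u=\phi_1/\phi_0$: a direct computation from \eqref{evalue problem 1} gives
\begin{equation*}
\Delta u+2\nabla\ln\phi_0\cdot\nabla u=-(\lambda_1-\lambda_0)\,u\qquad\text{in }\Omega .
\end{equation*}
Since $\phi_0>0$ in $\Omega$ with non-vanishing normal derivative on $\partial\Omega$ (Hopf lemma), $u$ extends continuously to $\overline\Omega$ and satisfies Neumann boundary data there (approximating $\Omega$ by smooth convex domains if needed), and $u$ is non-constant because $\phi_1$ is not a multiple of $\phi_0$. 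Hence $v(x,t):=u(x)\,\E^{-(\lambda_1-\lambda_0)t}$ solves the drift heat equation $\partial_t v=\Delta v+2\nabla\ln\phi_0\cdot\nabla v$ on $\Omega$ with Neumann boundary data.

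Next I would analyse the one-dimensional model. The same computation performed on \eqref{evalue problem 2} shows that $\tilde u:=\tilde\phi_1/\tilde\phi_0$ satisfies $\tilde u''+2(\ln\tilde\phi_0)'\,\tilde u'=-(\mu_1-\mu_0)\,\tilde u$ on $(-D/2,D/2)$, with $\tilde u'\to0$ at $\pm D/2$ (the singular drift enforces the Neumann condition). Because $\tilde V$ is even, $\tilde\phi_0$ is even and positive while $\tilde\phi_1$ is odd with its only zero at the origin, so $\tilde u$ is odd, positive on $(0,D/2]$, and strictly increasing on $[0,D/2]$ — this last point following from the sign of the Wronskian-type quantity $\tilde\phi_1'\tilde\phi_0-\tilde\phi_1\tilde\phi_0'$. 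Consequently, for every $c>0$,
\begin{equation*}
\varphi(z,t):=c\,\tilde u(z)\,\E^{-(\mu_1-\mu_0)t}
\end{equation*}
solves the one-dimensional equation $\partial_t\varphi=\varphi''+2(\ln\tilde\phi_0)'\,\varphi'$ on $[-D/2,D/2]$; and since $u$ is Lipschitz on $\overline\Omega$ while $\tilde u'(0)>0$, I can fix $c$ large enough that $\varphi(\cdot,0)=c\,\tilde u$ is a modulus of continuity for $v(\cdot,0)=u$ in the sense of \eqref{eq:mod-of-cont}.

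Now I would invoke the comparison principle. By Theorem~\ref{log concavity}, $(\ln\tilde\phi_0)'$ is a modulus of concavity for $\ln\phi_0$, equivalently $2(\ln\tilde\phi_0)'$ is a modulus of contraction for the drift field $2\nabla\ln\phi_0$. Applying the modulus-of-continuity comparison of \cites{andrews-clutterbuck-jde,ac:gradient-estimates-hd} — in the form valid for a parabolic equation on a convex domain with Neumann data whose drift carries a prescribed modulus of contraction — the function $\varphi(\cdot,t)$ remains a modulus of continuity for $v(\cdot,t)$ for all $t>0$, that is,
\begin{equation*}
|u(y)-u(x)|\,\E^{-(\lambda_1-\lambda_0)t}\ \le\ 2c\,\tilde u\!\left(\tfrac{|y-x|}{2}\right)\E^{-(\mu_1-\mu_0)t}
\end{equation*}
for all $x,y\in\Omega$ and all $t\ge0$. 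Fixing a pair $x,y$ with $u(x)\neq u(y)$ and letting $t\to\infty$ then forces $\lambda_1-\lambda_0\ge\mu_1-\mu_0$, which is the claimed inequality; in particular, taking $\tilde V\equiv0$ when $V$ is convex recovers $\lambda_1-\lambda_0\ge 3\pi^2/D^2$.

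The main obstacle is the comparison step itself: \cite{ac:gradient-estimates-hd}*{Theorem 4.1} is stated only for the plain heat equation, so one must extend it to equations $\partial_t v=\Delta v+b\cdot\nabla v$ on convex domains with Neumann data when $b$ has a given modulus of contraction. This means re-running the two-point maximum principle in the $2n$ spatial variables $(x,y)$ and checking that the extra drift term enters with a favourable sign precisely by virtue of the modulus-of-contraction hypothesis; one must also make sense of the singular drift $\nabla\ln\phi_0$ near $\partial\Omega$, justify the Neumann condition for $u=\phi_1/\phi_0$, and handle non-smooth convex $\Omega$ by approximation.
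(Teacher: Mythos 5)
Your strategy coincides with the paper's proof of Proposition \ref{prop:specgap}: pass to $v=\E^{-(\lambda_1-\lambda_0)t}\phi_1/\phi_0$, which solves the Neumann drift heat equation with drift $X=2\nabla\log\phi_0$ (the paper's Proposition \ref{prop: Neumann for v}), use Theorem \ref{log concavity} to give $X$ the modulus of contraction $2(\log\tilde\phi_0)'$, compare against $\varphi=c\,(\tilde\phi_1/\tilde\phi_0)\E^{-(\mu_1-\mu_0)t}$, and send $t\to\infty$. The step you single out as the main obstacle -- extending the modulus-of-continuity estimate of \cite{ac:gradient-estimates-hd} to $\partial_t v=\Delta v+X\cdot\nabla v$ with $X$ having a prescribed modulus of contraction -- is exactly the paper's Theorem \ref{thm: drift diffusion}, and it goes through just as you sketch: in the two-point maximum principle the drift contributes $\varphi'\,(X(y)-X(x))\cdot\frac{y-x}{|y-x|}\le 2\omega\varphi'$, which is the favourable sign.

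The genuine gap is elsewhere: with your choices, the hypotheses of that comparison theorem fail at $z=D/2$. The modulus $\omega=2(\log\tilde\phi_0)'$ tends to $-\infty$ there (it is not finite, let alone smooth, on the closed interval), and, as you yourself note, $\tilde u'\to 0$ at $D/2$, so $\varphi'$ vanishes at the endpoint. Strict positivity of $\varphi'$ up to and including $z=D/2$ is what excludes the two-point maximum occurring with $x$ or $y$ on $\partial\Omega$: that case is ruled out via $D_{\nu_y}Z=-\varphi'\bigl(\tfrac{|y-x|}{2}\bigr)\frac{y-x}{|y-x|}\cdot\nu_y<0$, and $|y-x|$ can be arbitrarily close to (or equal to) $D$ when both points approach the boundary, so with $\varphi'(D/2)=0$ the contradiction evaporates and "apply the comparison principle" is not justified as stated. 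The paper resolves this with a regularization you are missing: replace the Dirichlet eigenfunctions by Robin eigenfunctions $\tilde\phi_{0,\ep}$ and $\tilde\phi_{1,\tilde\ep}$ with $\tilde\ep>\ep>0$, constructed via the Pr\"ufer transformation, so that $\omega_\ep=2(\log\tilde\phi_{0,\ep})'$ is smooth on $[0,D/2]$ and $\omega_\ep\ge\omega$ (hence still a modulus of contraction for $X$), while ODE comparison gives $\varphi_{\ep,\tilde\ep}'=\varphi_{\ep,\tilde\ep}\bigl((\log\tilde\phi_{1,\tilde\ep})'-(\log\tilde\phi_{0,\ep})'\bigr)>0$ on all of $[0,D/2]$; one then lets $\tilde\ep\to\ep$ and $\ep\to0$ and recovers $\osc v(\cdot,t)\le 2c\,\E^{-(\mu_1-\mu_0)t}$, after which your $t\to\infty$ argument is fine. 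Incidentally, the $n$-dimensional singularity of $\nabla\log\phi_0$ near $\partial\Omega$ that you flag is not the real difficulty: Theorem \ref{thm: drift diffusion} only uses the drift at interior points, the boundary case being handled by the Neumann condition for $v$ and strict convexity; the singularity that must be tamed is the one-dimensional one at $z=D/2$ just described.
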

If $V$ is convex ,  the gap conjecture follows since we can choose $\tilde{V}=0$:

\begin{corollary} \label{convex corollary} If $V$ is weakly convex, then the spectral gap has the bound
$$\lambda_1-\lambda_0\ge \frac{3\pi^2}{D^2}.$$
\end{corollary}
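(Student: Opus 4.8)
The plan is to obtain Corollary \ref{convex corollary} as an immediate specialization of Theorem \ref{spectral gap}, applied with $\tilde V\equiv 0$. The first thing to check is that the hypothesis \eqref{V-convexity} is satisfied by the pair $(V,\tilde V)=(V,0)$. Since $\tilde V'\equiv 0$, the inequality \eqref{V-convexity} reduces to
\begin{equation*}
\left(\nabla V(y)-\nabla V(x)\right)\cdot\frac{y-x}{|y-x|}\ge 0\qquad\text{for all }y\neq x\text{ in }\Omega,
\end{equation*}
which is precisely the statement that $\nabla V$ is a monotone vector field on the convex domain $\Omega$, i.e. that $V$ is weakly convex. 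Hence Theorem \ref{spectral gap} applies and yields $\lambda_1-\lambda_0\ge\mu_1-\mu_0$, where $\mu_0<\mu_1\le\cdots$ denote the Dirichlet eigenvalues of $-\frac{d^2}{dx^2}$ on $[-D/2,D/2]$.

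The second step is to evaluate the one-dimensional gap $\mu_1-\mu_0$ explicitly. The Dirichlet eigenvalues of $-\frac{d^2}{dx^2}$ on an interval of length $D$ are $\frac{k^2\pi^2}{D^2}$, $k=1,2,\dots$, with corresponding eigenfunctions $\sin\left(\frac{k\pi(x+D/2)}{D}\right)$; in particular $\mu_0=\frac{\pi^2}{D^2}$ and $\mu_1=\frac{4\pi^2}{D^2}$, so $\mu_1-\mu_0=\frac{3\pi^2}{D^2}$. Combining this with the bound from the first step gives $\lambda_1-\lambda_0\ge\frac{3\pi^2}{D^2}$, which is the assertion of the corollary.

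I do not expect any substantive obstacle here: the corollary is a direct consequence of Theorem \ref{spectral gap} together with the elementary computation of the spectrum on an interval, and all of the analytic content sits in that theorem (and ultimately in the log-concavity estimate and the modulus-of-continuity machinery described above). As a consistency check one may note that with $\tilde V\equiv 0$ the positive first eigenfunction on $[-D/2,D/2]$ is $\tilde\phi_0(x)=\cos\left(\frac{\pi x}{D}\right)$, so $(\ln\tilde\phi_0)'(z)=-\frac{\pi}{D}\tan\left(\frac{\pi z}{D}\right)$, in agreement with the modulus of concavity for $\ln\phi_0$ recorded in the remark immediately preceding Theorem \ref{spectral gap}.
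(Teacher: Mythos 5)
Your proof is correct and follows the same route as the paper: specialize Theorem \ref{spectral gap} to $\tilde V\equiv 0$, observe that \eqref{V-convexity} then just expresses weak convexity of $V$, and compute the one-dimensional Dirichlet gap to be $\mu_1-\mu_0=\frac{4\pi^2}{D^2}-\frac{\pi^2}{D^2}=\frac{3\pi^2}{D^2}$. (Incidentally, your eigenfunctions are the correct ones; the paper's Section \ref{examples section} writes $\tilde\phi_1(z)=\cos(2\pi z/D)$, which does not vanish at $z=\pm D/2$ and should read $\sin(2\pi z/D)$, and also has a typo $D^3$ for $D^2$ in the final display, but the eigenvalues and the conclusion are unaffected.)
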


In particular this applies for zero potential.
The key step in proving the gap conjecture is a sharp log-concavity estimate for the first eigenfunction:

\begin{theorem} 
\label{log concavity}  Let $\phi_0$ be the first eigenfunction of \eqref{evalue problem 1}.  If the potential $V$ satisfies condition \eqref{V-convexity}, then the log-concavity estimate \eqref{log concavity eqn} holds for every $y\neq x$ in $\Omega$.
\end{theorem}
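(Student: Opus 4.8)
The plan is to prove the estimate by the parabolic method of \cite{ac:gradient-estimates-hd}, applied to the gradient of $\ln$ of a solution of the heat equation. Let $u$ solve $\partial_t u = \Delta u - Vu$ on $\Omega$ with Dirichlet data and a smooth positive initial function $u_0$ which is not $L^2$-orthogonal to $\phi_0$; then $u(\cdot,t) > 0$ for $t > 0$, and since $\lambda_0$ is simple, $\E^{\lambda_0 t} u(\cdot,t)$ converges in $C^2_{\mathrm{loc}}(\Omega)$ to a positive multiple of $\phi_0$, so that $w := \ln u$ satisfies $\partial_t w = \Delta w + |\nabla w|^2 - V$ and $\nabla w(\cdot,t) \to \nabla\ln\phi_0$ locally uniformly. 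Alongside it, run the companion one-dimensional flow: let $\tilde u$ solve $\partial_t\tilde u = \tilde u'' - \tilde V\tilde u$ on $[-D/2,D/2]$ with Dirichlet data and even positive initial function, and set $\psi := (\ln\tilde u)'$. Then $\psi$ is odd, vanishes at $0$, blows up to $\mp\infty$ at $\pm D/2$, satisfies the Riccati-type flow $\partial_t\psi = \psi'' + 2\psi\psi' - \tilde V'$, and $\psi(\cdot,t) \to (\ln\tilde\phi_0)'$ as $t\to\infty$; in particular the stationary choice $\tilde u(\cdot,0) = \tilde\phi_0$ gives $\psi \equiv (\ln\tilde\phi_0)'$. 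Thus it is enough to prove the preservation statement: \emph{if $\psi(\cdot,0)$ is a modulus of concavity for $w(\cdot,0)$, then $\psi(\cdot,t)$ is a modulus of concavity for $w(\cdot,t)$ for every $t > 0$.} Passing to the limit $t\to\infty$ in this, with $\tilde u(\cdot,0) = \tilde\phi_0$, gives exactly \eqref{log concavity eqn}.

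To prove preservation, consider on $\bar\Omega\times\bar\Omega\setminus\{x=y\}$ the two-point function
\begin{equation*}
Q(x,y,t) = \bigl(\nabla w(y,t) - \nabla w(x,t)\bigr)\cdot\frac{y-x}{|y-x|} - 2\psi\Bigl(\frac{|y-x|}{2},t\Bigr),
\end{equation*}
which is $\le 0$ at $t=0$, and suppose — after the usual regularization by adding $\ep\,\E^{Kt}$ — that it first reaches a positive value at an interior $(x_0,y_0,t_0)$ with $x_0\ne y_0$. Put $e = (y_0-x_0)/|y_0-x_0|$ and $r_0 = \tfrac12|y_0-x_0|$. One now uses the first-order conditions (vanishing of $\nabla_x Q$ and $\nabla_y Q$) and negative semidefiniteness of the full $2n\times 2n$ spatial Hessian of $Q$, tested against the variations $(\xi,\xi)$ and $(\xi,-\xi)$ for $\xi$ running over an orthonormal basis of $e^\perp$, and against the radial direction $(e,e)$ — this is precisely the step where the off-diagonal blocks of the Hessian are essential, exactly as in \cite{ac:gradient-estimates-hd}. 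Computing $\partial_t Q$ from $\partial_t w = \Delta w + |\nabla w|^2 - V$: the $\nabla\Delta w$ terms are rewritten as second spatial derivatives of $Q$, hence nonpositive by the Hessian condition; the $\nabla|\nabla w|^2$ terms are simplified via the first-order conditions; the potential contributes $-\bigl(\nabla V(y_0)-\nabla V(x_0)\bigr)\cdot e \le -2\tilde V'(r_0)$ directly by hypothesis \eqref{V-convexity}; and the comparison term contributes $-2\bigl(\psi'' + 2\psi\psi' - \tilde V'\bigr)(r_0,t_0)$ through the Riccati flow, whose $-\tilde V'$ part cancels the potential term and whose $\psi''+2\psi\psi'$ part is designed to cancel the reductions above. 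The outcome is $\partial_t Q(x_0,y_0,t_0) < 0$, contradicting that $Q$ is nondecreasing in $t$ at a first touching point.

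I expect this last computation to be the main obstacle: one must verify that, once the first- and second-order conditions on $w$ are imposed, the residual terms are matched \emph{exactly} by the Riccati terms $\psi'' + 2\psi\psi'$ for $\psi = (\ln\tilde u)'$ together with the single term supplied by \eqref{V-convexity}, so that no slack is needed — this is where the particular form of the equation satisfied by $(\ln\tilde\phi_0)'$ (equivalently, the eigenfunction equation \eqref{evalue problem 2}) and the convexity hypothesis on $V$ have to conspire. A secondary issue is to rule out a first positive value of $Q$ on the set where $x$ or $y$ tends to $\partial\Omega$: there $w\to-\infty$ and $\nabla w$ blows up, but $\psi$ blows up at $\pm D/2$ as well, and since $\phi_0$ (hence, for $u_0$ comparable to $\phi_0$, also $w(\cdot,t)$ up to a bounded error uniform in $t$) vanishes to first order on $\partial\Omega$ while $\tilde\phi_0$ vanishes to first order at $\pm D/2$, the blow-ups match; when only one point approaches $\partial\Omega$ the left side of the modulus inequality tends to $-\infty$ against a finite right side, and when $x,y$ approach opposite ends of a diameter — with distances to $\partial\Omega$ equal to $a$ and $b$, say — the inequality reduces asymptotically to $\tfrac1a + \tfrac1b \ge \tfrac4{a+b}$, i.e. $(a-b)^2\ge 0$, which also shows \eqref{log concavity eqn} is sharp.

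Finally one needs one admissible pair of initial data: a smooth positive $u_0$ vanishing to first order on $\partial\Omega$ and not orthogonal to $\phi_0$, such that the stationary modulus $(\ln\tilde\phi_0)'$ is already a modulus of concavity for $\ln u_0$. The choice $u_0 = \E^{-A|x|^2}\phi_0$ works for $A$ large: since $\ln\phi_0 = \ln\dist(\cdot,\partial\Omega) + (\text{smooth on }\bar\Omega)$, its boundary blow-up already matches $(\ln\tilde\phi_0)'$, while $\ln\bigl(\E^{-A|x|^2}\bigr) = -A|x|^2$ contributes $-A|y-x|$ to the left side of \eqref{log concavity eqn} and so absorbs the bounded interior discrepancy once $A$ is large enough (here one also uses that $\dist(\cdot,\partial\Omega)$ is concave on the convex set $\Omega$). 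Since the preservation statement is not sensitive to regularity of $\partial\Omega$, the case of a general convex $\Omega$ follows by approximation by smooth uniformly convex domains, exactly as in \cite{ac:gradient-estimates-hd}.
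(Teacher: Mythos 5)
Your interior two-point computation is exactly the engine the paper uses (Theorem \ref{thm:log conc}): vanishing first variations, the full $2n\times 2n$ Hessian tested on $(\xi,\xi)$ for $\xi\perp e$ and on $(e,-e)$, the Riccati terms $\psi''+2\psi\psi'$ absorbing the reductions, and \eqref{V-convexity} supplying $-2\tilde V'$; that part closes with no slack needed, as you suspected. The gap is in the surrounding architecture. You run the comparison with the \emph{stationary, singular} modulus $\psi\equiv(\ln\tilde\phi_0)'$ (which tends to $-\infty$ at $z=D/2$) against a Dirichlet solution $u$ vanishing on $\partial\Omega$, and you let the sharpness come from $\nabla\ln u(\cdot,t)\to\nabla\ln\phi_0$. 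This creates two problems you have not resolved. First, your scheme needs an admissible $u_0$ for which $(\ln\tilde\phi_0)'$ is \emph{already} a modulus of concavity of $\ln u_0$; for $u_0=\E^{-A|x|^2}\phi_0$ the only help the Gaussian factor gives is a bounded term $2A|y-x|\leq 2AD$ on the right of \eqref{log concavity eqn}, while near a diametral chord both sides blow up like $-4/(D-|y-x|)$ and, as your own computation shows, the asymptotic inequality degenerates to $(a-b)^2\geq 0$ \emph{with equality} in the symmetric configuration. So verifying the initial modulus requires matching the boundary asymptotics of $\nabla\ln\phi_0$ to the sharp constant with only an $O(1)$ cushion --- this is essentially a hard piece of the theorem itself, not a routine choice of initial data. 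Second, and more structurally, the same equality-in-the-limit kills the maximum-principle step: because $Z$ (even after subtracting $\ep\E^{Kt}$) can approach its supremum only as $(x,y)$ degenerates toward the endpoints of a diameter, you cannot assert a first touching point at an \emph{interior} pair where the first- and second-order conditions are available; ``the blow-ups match'' is precisely the non-compact, zero-margin regime, and no strict-sign buffer is identified there.

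The paper's proof avoids exactly this by reversing the roles of the two evolutions. It takes $u(x,t)=\phi_0(x)\E^{-\lambda_0 t}$, so $\nabla\ln u$ is time-independent, and instead evolves the one-dimensional modulus: it starts from a deliberately crude, \emph{finite} modulus of concavity of $\ln\phi_0$ (built in Corollary \ref{cor:lim.beh} from Riccati barriers $\psi'+\psi^2=\tilde V-\mu$ and Robin eigenfunctions $\tilde\phi_{0,1/k}$, with boundary value $\psi_k(D/2,t)=-k$), proves the preservation statement for such bounded moduli --- where Lemmas \ref{lem:logubdry} and \ref{lem:bdry} give a uniform strict bound $Z<\beta$ near $\partial\hat\Omega$ on $[0,T]$, so the touching point really is interior --- and only recovers the sharp singular modulus $(\ln\tilde\phi_0)'$ in the limit $t\to\infty$, $k\to\infty$, using that $\psi_k(\cdot,t)$ decreases to $(\ln\tilde\phi_{0,1/k})'$. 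To rescue your route you would need either a genuinely new boundary argument controlling $Z$ up to the diametral corner with the singular modulus, or to approximate as the paper does, at which point you have reconstructed its proof.
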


Let us note a special case of this result which can be stated rather more elegantly:  Suppose 
$\tilde V'$ is also a modulus of convexity for $\tilde V$.  It then follows from Theorem \ref{log concavity} applied with $n=1$ and  $V=\tilde V$ that $(\log\tilde\phi_0)'$ is a (sharp) modulus of concavity for $\log\tilde\phi_0$.  This happens, for example, if $\tilde V$ has non-negative third derivatives for positive arguments:  In that case $\tilde V'(b)-\tilde V'(a)\geq \tilde V'(\frac{b-a}{2})-\tilde V'(-\frac{b-a}{2})=2\tilde V'(\frac{b-a}{2})$, so that $\tilde V$ has (sharp) modulus of convexity $\tilde V'$.  It also happens if $\tilde V$ is the optimal choice defined by $2\tilde V'(z/2)=\sup\{(\nabla V(x+ze)-\nabla V(x))\cdot e:\ x,x+ze\in\Omega,\ \|e\|=1\}$.  Thus in these cases we have:

\begin{corollary}\label{cor: compare modulus}
Under the conditions above, if the modulus of convexity of $V$ is bounded below by the modulus of convexity of $\tilde V$, then the modulus of concavity of $\log\phi_0$ is bounded above by the modulus of concavity of $\log\tilde\phi_0$.
\end{corollary}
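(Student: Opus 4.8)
The plan is to deduce Corollary~\ref{cor: compare modulus} from Theorem~\ref{log concavity} by nothing more than unwinding the definitions of modulus of convexity and modulus of concavity. The key observation to set up first is the trivial but crucial dictionary: a function $\omega$ is a modulus of convexity for $f$ exactly when it lies pointwise below the optimal (largest) modulus of convexity $m_f$ of $f$, and dually $\omega$ is a modulus of concavity for $f$ exactly when it lies pointwise above the optimal (smallest) modulus of concavity $m^\ast_f$. Thus ``larger moduli of convexity'' and ``smaller moduli of concavity'' are the sharper ones, and the corollary becomes a chain of inequalities between such optimal moduli.

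First I would record what ``the conditions above'' supply, namely that $\tilde V$ is even and $\tilde V'$ is a modulus of convexity for $\tilde V$. Applying Theorem~\ref{log concavity} in the case $n=1$, $V=\tilde V$ --- where \eqref{evalue problem 1} and \eqref{evalue problem 2} coincide, so $\phi_0=\tilde\phi_0$ up to scale and $\nabla\log\phi_0=(\log\tilde\phi_0)'$ --- gives that $(\log\tilde\phi_0)'$ is a modulus of concavity for $\log\tilde\phi_0$. Because $\tilde\phi_0$ is even, $(\log\tilde\phi_0)'$ is odd, so putting $y=-x$ in \eqref{log concavity eqn} produces equality; hence $(\log\tilde\phi_0)'$ is in fact the optimal modulus of concavity $m^\ast_{\log\tilde\phi_0}$, which is the quantity meant by ``the modulus of concavity of $\log\tilde\phi_0$''.

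The one substantive step is to check that the comparison hypothesis of the corollary forces condition \eqref{V-convexity}. Since $\tilde V'$ is a modulus of convexity for $\tilde V$ it satisfies $\tilde V'\le m_{\tilde V}$ pointwise; the hypothesis that the modulus of convexity of $V$ dominates that of $\tilde V$ reads $m_{\tilde V}\le m_V$; hence $\tilde V'\le m_V$, which says exactly that $\tilde V'$ is a modulus of convexity for $V$, i.e.\ that \eqref{V-convexity} holds. Theorem~\ref{log concavity} now applies and gives \eqref{log concavity eqn}, i.e.\ $(\log\tilde\phi_0)'$ is a modulus of concavity for $\log\phi_0$, so $m^\ast_{\log\phi_0}\le(\log\tilde\phi_0)'=m^\ast_{\log\tilde\phi_0}$, which is the assertion. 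I do not anticipate a real obstacle here, since all the analytic work is already inside Theorem~\ref{log concavity}; the only thing to watch is the direction of the inequalities in the dictionary above, together with the small point that the evenness of $\tilde V$ --- hence oddness of $(\log\tilde\phi_0)'$ --- is what upgrades ``a modulus of concavity'' to ``the sharp modulus of concavity'' of $\log\tilde\phi_0$, and so makes the two sides of the final inequality directly comparable.
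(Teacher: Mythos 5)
Your argument is correct and is essentially the paper's own: the corollary is obtained exactly as in the paragraph preceding it, by applying Theorem~\ref{log concavity} with $n=1$, $V=\tilde V$ to see that $(\log\tilde\phi_0)'$ is the sharp modulus of concavity of $\log\tilde\phi_0$, and then noting that the comparison hypothesis yields \eqref{V-convexity}, so Theorem~\ref{log concavity} makes $(\log\tilde\phi_0)'$ a modulus of concavity for $\log\phi_0$. Your antipodal-points ($y=-x$, oddness of $(\log\tilde\phi_0)'$) verification of sharpness is a small addition the paper only asserts parenthetically, but it is the intended justification.
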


Throughout the computations we assume that $\Omega$ has smooth boundary and is uniformly convex (so the principal curvatures of $\partial\Omega$ have a positive lower bound), and that $V$ is smooth.  The results for the general case of convex $\Omega$ and semiconvex $V$ follow using a straightforward approximation argument
(see for example \cite{MR2512810} and \cite{henrot-book}*{Th 2.3.17}).

The paper is arranged as follows:   In Section \ref{spectral gap section}  we prove a general modulus of continuity estimate for solutions of Neumann heat equations with drift.  In Section \ref{implication} we use this to prove
Theorem \ref{spectral gap}, assuming the result of Theorem \ref{log concavity}.  The proof of the modulus of continuity estimate provides a model for the more involved log-concavity estimate, which we
prove in Section \ref{log concavity section}.   Finally, in Section \ref{examples section} we discuss some examples and extensions of our results.

\section{Modulus of continuity for heat equation with drift} \label{spectral gap section}

We prove here a generalization of \cite{ac:gradient-estimates-hd}*{Theorem 4.1}, which controls the modulus of continuity of solutions of a Neumann heat equation with drift, in terms of the modulus of contraction of the drift velocity:

\begin{theorem}\label{thm: drift diffusion}
Let $\Omega$ be a strictly convex domain of diameter $D$ with smooth boundary in $\RR^n$, and $X$  a time-dependent vector field on $\Omega$.  Suppose $v:\ \Omega\times\RR_+\to\RR$ is a smooth solution
of the equation
\begin{align}\label{eq:diff_drift}
\frac{\partial v}{\partial t} &= \Delta v + X\cdot\nabla v\quad\text{in\ }\Omega\times\RR_+;\\
D_\nu v&=0\quad\text{in\ }\partial\Omega\times\RR_+.\notag
\end{align}
Suppose that
\begin{enumerate}[label={\arabic*.}, ref={\arabic*}]
\item\label{item1} $X(.,t)$ has modulus of contraction $\omega(.,t)$ for each $t\geq 0$, where $\omega:\ [0,D/2]\times\RR_+\to\RR$ is smooth;
\item\label{item2} $v(.,0)$ has modulus of continuity $\varphi_0$, where $\varphi_0:\ [0,D/2]\to\RR$ is smooth with $\varphi_0(0)=0$ and $\varphi_0'(z)>0$ for $0\leq z\leq D/2$;
\item\label{item3} $\varphi:\ [0,D/2]\times\RR_+\to\RR$ satisfies 
\begin{enumerate}[label={(\roman*).}, ref={(\roman*)}]
\item\label{itema} $\varphi(z,0) = \varphi_0(z)$ for each $z\in[0,D/2]$;
\item\label{itemb} $\frac{\partial \varphi}{\partial t}\geq \varphi''+\omega\varphi'$ on $[0,D/2]\times\RR_+$;
\item\label{itemc} $\varphi'>0$ on $[0,D/2]\times\RR_+$;
\item\label{itemd} $\varphi(0,t)\geq 0$ for each $t\geq 0$.
\end{enumerate}
\end{enumerate}
Then $\varphi(.,t)$ is a modulus of continuity for $v(.,t)$ for each $t\geq 0$.
\end{theorem}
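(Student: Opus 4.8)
The plan is to establish $Q\le 0$ on $\overline\Omega\times\overline\Omega\times\RR_+$ for the function of $2n+1$ variables
\[
Q(x,y,t)=v(y,t)-v(x,t)-2\varphi\left(\frac{|y-x|}{2},t\right),
\]
since then, because $\varphi(|y-x|/2,t)$ is symmetric in $x$ and $y$, one gets $|v(y,t)-v(x,t)|\le 2\varphi(|y-x|/2,t)$, which is the assertion. This is the maximum-principle scheme used for \cite{ac:gradient-estimates-hd}*{Theorem 4.1}, now with the drift term $X\cdot\nabla v$ present.

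I would first fix $T>0$ and pass to a strict supersolution by setting $\varphi_\ep(z,t)=\varphi(z,t)+\ep\,\E^{Kt}(1+z)$ with $K>\sup_{[0,D/2]\times[0,T]}|\omega|$. Then $\varphi_\ep''=\varphi''$, while $\varphi_\ep$ keeps $\varphi_\ep'>0$ and $\varphi_\ep(0,t)>0$ and strengthens the remaining hypotheses to the strict forms $\varphi_\ep(z,0)>\varphi_0(z)$ and $\partial_t\varphi_\ep>\varphi_\ep''+\omega\varphi_\ep'$. The associated quantity $Q_\ep$ (same formula with $\varphi_\ep$ in place of $\varphi$) is continuous on $\overline\Omega\times\overline\Omega\times[0,T]$, equals $-2\varphi_\ep(0,t)<0$ on the diagonal $\{x=y\}$, is strictly negative at $t=0$, and — combining the gradient bound for $v$ on $[0,T]$ with $\varphi_\ep(0,t)>0$ — is strictly negative whenever $|y-x|$ is below some $\delta>0$, uniformly for $t\in[0,T]$. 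Thus if $Q_\ep$ were positive somewhere there would be a first time $t_0\in(0,T]$ and a point $(x_0,y_0)$ with $Q_\ep(x_0,y_0,t_0)=0$, with $Q_\ep\le 0$ on $\overline\Omega\times\overline\Omega\times[0,t_0]$, and with $d_0:=|y_0-x_0|\ge\delta>0$.

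Next I would show $(x_0,y_0)$ is interior. If $y_0\in\partial\Omega$, then since $y\mapsto Q_\ep(x_0,y,t_0)$ is maximal over $\overline\Omega$ at $y_0$ we have $D_\nu Q_\ep\ge 0$ there; but $D_\nu v(y_0)=0$ by the Neumann condition, and a direct computation gives $D_\nu Q_\ep=-\varphi_\ep'(d_0/2,t_0)\,\nu(y_0)\cdot e$ with $e=(y_0-x_0)/d_0$, which is $<0$ because $\varphi_\ep'>0$ and strict convexity of $\Omega$ forces $\nu(y_0)\cdot(y_0-x_0)>0$ (as $x_0\in\overline\Omega$, $x_0\ne y_0$); this contradiction, and its mirror image for $x_0$, puts $(x_0,y_0)$ in $\Omega\times\Omega$. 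There the first-order conditions $\nabla_xQ_\ep=\nabla_yQ_\ep=0$ give $\nabla v(x_0)=\nabla v(y_0)=\varphi_\ep'(d_0/2,t_0)\,e$, so that, by the modulus-of-contraction hypothesis on $X$,
\[
X(y_0,t_0)\cdot\nabla v(y_0)-X(x_0,t_0)\cdot\nabla v(x_0)=\varphi_\ep'\big(X(y_0,t_0)-X(x_0,t_0)\big)\cdot e\le 2\,\varphi_\ep'\,\omega(d_0/2,t_0).
\]

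The crucial step — and the one I expect to be the main obstacle — is the second-order term, for which we use that the full $2n\times 2n$ spatial Hessian of $Q_\ep$ is negative semidefinite at $(x_0,y_0)$. After computing the Hessian of $(x,y)\mapsto\varphi_\ep(|y-x|/2,t)$ explicitly, one tests the matrix inequality against $2n$-vectors of the form $(w,w)$ and $(w,-w)$: for an orthonormal basis $e^{(1)},\dots,e^{(n)}$ of $\RR^n$ with $e^{(1)}=e$, the vectors $(e^{(k)},e^{(k)})$, $k=2,\dots,n$, annihilate the $\varphi_\ep$-Hessian and give $v_{e^{(k)}e^{(k)}}(y_0)\le v_{e^{(k)}e^{(k)}}(x_0)$, whereas $(e,-e)$ reproduces exactly the one-dimensional diffusion term and gives $v_{ee}(y_0)-v_{ee}(x_0)\le 2\varphi_\ep''(d_0/2,t_0)$; adding these $n$ inequalities yields $\Delta v(y_0)-\Delta v(x_0)\le 2\varphi_\ep''(d_0/2,t_0)$. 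Feeding this together with the drift estimate into
\[
\partial_tQ_\ep=\big(\Delta v(y_0)-\Delta v(x_0)\big)+\big(X(y_0,t_0)\cdot\nabla v(y_0)-X(x_0,t_0)\cdot\nabla v(x_0)\big)-2\,\partial_t\varphi_\ep
\]
gives $\partial_tQ_\ep\le 2\big(\varphi_\ep''+\omega\varphi_\ep'-\partial_t\varphi_\ep\big)<0$ at $(x_0,y_0,t_0)$, contradicting $\partial_tQ_\ep\ge 0$ there (which holds since $Q_\ep\le 0$ on $[0,t_0]$ with equality at $t_0$). Hence $Q_\ep\le 0$ on $\overline\Omega\times\overline\Omega\times[0,T]$; letting $\ep\downarrow 0$, then $T\uparrow\infty$, and symmetrising in $x$ and $y$ finishes the proof. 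Apart from this Hessian computation, the work is routine bookkeeping: verifying that $\varphi_\ep$ retains properties (i)--(iv) in strict form, and that $Q_\ep$ is well behaved near the diagonal and the boundary.
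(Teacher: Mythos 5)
Your proposal is correct and takes essentially the same route as the paper: a maximum-principle argument on the two-point function $v(y,t)-v(x,t)-2\varphi(|y-x|/2,t)$, with the boundary case excluded via the Neumann condition, strict convexity and $\varphi'>0$, and the interior case handled by testing the $2n\times 2n$ Hessian against $(e^{(k)},e^{(k)})$ and $(e,-e)$ before the time-derivative contradiction. The only cosmetic difference is that you gain strictness by perturbing $\varphi$ to $\varphi+\ep\,\E^{Kt}(1+z)$ on a finite time interval, whereas the paper subtracts $\ep\,\E^{t}$ directly from the two-point function; both devices work.
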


\begin{proof}
The only difference from the situation in \cite{ac:gradient-estimates-hd} is the drift term in \ref{eq:diff_drift}.  For any $\varepsilon\geq 0$, define a function $Z_\ep$ on $\bar\Omega\times\bar\Omega\times\RR_+$ by
$$
Z_\ep(y,x,t) = v(y,t)-v(x,t)-2\varphi\left(\frac{|y-x|}{2},t\right)-\varepsilon\E^t.
$$
By assumption $Z_\ep(x,y,0)\leq -\varepsilon$ for every $x\neq y$ in $\Omega$, and $Z_\ep(x,x,t)\leq-\ep$ for every $x\in\Omega$ and $t\geq 0$.  We will prove for any $\ep>0$ that $Z_\ep$ is negative on $\Omega\times\Omega\times\RR_+$.  If this is not true, then there exists a first time $t_0>0$ and points $x\neq y\in\bar\Omega$ such that $Z_\ep(x,y,t_0)=0$.  

We consider two possibilities:  If $y\in\partial\Omega$, then we have 
\begin{equation*}
D_{\nu_y}Z_\ep=D_{\nu_y}v(y,t)-\varphi'\frac{(y-x)}{|y-x|}\cdot \nu_y 
=-\varphi'\frac{(y-x)}{|y-x|}\cdot \nu_y< 0, 
\end{equation*}
where $\nu_y$ is the outward unit normal at $y$, and we used the Neumann condition, strict convexity of $\Omega$ (which implies $\frac{y-x}{|y-x|}\cdot\nu_y>0$), and assumption \ref{item3}\ref{itemc}.  This implies $Z_\ep(x,y-s\nu_y,t_0)>0$ for $s$ small, contradicting the fact that $Z_\ep\leq 0$ on $\bar\Omega\times\bar\Omega\times[0,t_0]$. The case where $x\in\partial\Omega$ is similar.

The second possibility is that $x$ and $y$ are in the interior of $\Omega$.  Then all first spatial derivatives of $Z_\ep$ (in $x$ and $y$) at $(x,y,t_0)$ vanish, and the full $2n\times 2n$ matrix of second derivatives is non-positive.  In particular we choose an orthonormal basis with $e_n=\frac{y-x}{|y-x|}$, and will use the inequalities
\begin{align*}
\frac{\partial^2}{\partial s^2}Z_\ep(x+se_n,y-se_n,t_0)\big|_{s=0}&\leq 0;\quad\text{and}\\
\frac{\partial^2}{\partial s^2}Z_\ep(x+se_i,y+se_i,t_0)\big|_{s=0}&\leq 0\quad\text{for\ }i=1,\dots,n-1.
\end{align*}

We compute these inequalities in terms of $v$:  The vanishing of first derivatives implies 
$\nabla v(y)=\nabla v(x) = \varphi' e_n$.  Along the path $(x+se_i,y+se_i)$ the distance $|y-x|$ is constant, so  
$$
0\geq \frac{d^2}{ds^2}Z_\ep(x+se_i,y+se_i,t_0) = D_iD_iv(y)-D_iD_iv(x).
$$
Along $(x+se_n,y-se_n)$ we have $\frac{d}{ds}|y-x|=-2$ and $\frac{d^2}{ds^2}|y-x|=0$, so
$$
0\geq \frac{d^2}{ds^2}Z_\ep(x+se_n,y-se_n,t_0) = D_nD_nv(y)-D_nD_nv(x)-2\varphi''.
$$
Summing the first inequality over $i=1,\dots,n-1$ and adding the last gives
$$
0\geq \Delta v(y)-\Delta v(x)-2\varphi''.
$$
Finally we compute the time derivative of $Z_\ep$ at $(x,y,t_0)$:
\begin{align*}
\frac{\partial}{\partial t}Z_\ep &= \Delta v(y)+X(y)\cdot\nabla v(y)\\
					&\quad\null-\Delta v(x) -X(x)\cdot\nabla v(x) -2\varphi_t-\ep\E^t\\
					&\leq 2\varphi'' +\varphi'\left(X(y)-X(x)\right)\cdot\frac{y-x}{|y-x|}-2\varphi_t-\ep\E^t\\
					&<2\varphi''+2\omega\varphi'-2\varphi_t\\
					&\leq 0,
\end{align*}
where we used the modulus of contraction of $X$, and the non-negativity of $\varphi'$, as well as the differential inequality for $\varphi$ assumed in the Theorem.  But this strict inequality is impossible since this is the first time where $Z_\ep\geq 0$.  This contradiction proves that $Z_\ep<0$ for every $\ep>0$, and therefore $Z_0\leq 0$ which is the claim of the Theorem.
\end{proof}

\section{Improved log-concavity implies the gap conjecture}\label{implication}

We will deduce Theorem \ref{spectral gap} by applying Theorem \ref{thm: drift diffusion} in the case where $v$ is a ratio of solutions to the parabolic equation corresponding to the operator $\Delta+V$.  We control the modulus of continuity of such a ratio assuming a modulus of concavity for the logarithm of the denominator:

\begin{proposition}\label{prop: Neumann for v}
Let $u_1$ and $u_0$ be two smooth solutions to the parabolic Schr\"odinger equation
\begin{align}\label{eq:Par.Schrod}
\frac{\partial u}{\partial t} &=\Delta u - V u\quad\text{on\ }\Omega\times\RR_+;\\
u&=0\quad\text{on\ }\partial\Omega\times\RR_+,\notag
\end{align}
on a bounded strictly convex domain $\Omega$ with smooth boundary in $\RR^n$, with $u_0$ positive on the interior of $\Omega$.   Let $v(x,t)=\frac{u_1(x,t)}{u_0(x,t)}$.  Then $v$ is smooth on $\Omega\times\RR_+$, and satisfies the following Neumann heat equation with drift:
\begin{align}\label{eq:Neumann_for_v}
\frac{\partial v}{\partial t}-\Delta v-2\nabla\log u_0\cdot\nabla v=0&\quad\text{on\ }\Omega\times[0,\infty);\\
D_\nu v= 0&\quad\text{on\ }\partial\Omega\times[0,\infty).\notag
\end{align}
\end{proposition}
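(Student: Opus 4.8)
The plan is to establish the three assertions in turn: smoothness of $v$ on the interior together with the drift equation, smoothness of $v$ up to $\partial\Omega$, and the Neumann boundary condition. On the interior of $\Omega$, where $u_0>0$, the quotient $v=u_1/u_0$ is smooth as a ratio of smooth functions, and to derive the equation I would avoid expanding $\Delta v$ directly and instead start from the identity $u_1=v\,u_0$. Applying $\partial_t-\Delta$ and the product rule,
\[
\partial_t u_1-\Delta u_1=(\partial_t v)u_0+v(\partial_t u_0-\Delta u_0)-u_0\Delta v-2\nabla u_0\cdot\nabla v .
\]
Since $u_0$ and $u_1$ solve \eqref{eq:Par.Schrod}, the left side equals $-Vv\,u_0$ and $\partial_t u_0-\Delta u_0=-Vu_0$; substituting, the potential terms cancel, leaving $(\partial_t v)u_0=u_0\Delta v+2\nabla u_0\cdot\nabla v$, and dividing by $u_0>0$ gives $\partial_t v=\Delta v+2\nabla\log u_0\cdot\nabla v$ on $\Omega\times\RR_+$. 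This step is routine.

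The substantive point is the behaviour near $\partial\Omega$, where $v$ is a priori of the indeterminate form $0/0$. Here I would invoke the Hopf boundary point lemma: since $u_0>0$ in the interior and vanishes on $\partial\Omega$, its outward normal derivative satisfies $D_\nu u_0<0$ on $\partial\Omega$. In a collar neighbourhood of $\partial\Omega$, writing $d$ for the smooth distance to the boundary, Hadamard's lemma provides smooth factorizations $u_i=d\,g_i$ depending smoothly on $t$, with $g_0\big|_{\partial\Omega}=-D_\nu u_0>0$ the inward normal derivative of $u_0$. Hence $g_0>0$ near $\partial\Omega$, and since $u_0>0$ in the interior, $g_0>0$ on all of $\bar\Omega$; therefore $v=u_1/u_0=g_1/g_0$ extends to a smooth function on $\bar\Omega\times\RR_+$.

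For the Neumann condition I would again use $u_1=v\,u_0$, now with all factors smooth up to the boundary, and apply the Laplacian:
\[
\Delta u_1=u_0\Delta v+2\nabla u_0\cdot\nabla v+v\Delta u_0 .
\]
On $\partial\Omega$ we have $u_0=0$; moreover, since $u_0$ and $u_1$ vanish on $\partial\Omega$ for all $t$, their time derivatives vanish there, so \eqref{eq:Par.Schrod} forces $\Delta u_0=\Delta u_1=0$ on $\partial\Omega$, whence $\nabla u_0\cdot\nabla v=0$ on $\partial\Omega$. The Dirichlet condition makes the tangential derivatives of $u_0$ vanish on $\partial\Omega$, so $\nabla u_0=(D_\nu u_0)\nu$ there with $D_\nu u_0\neq0$; cancelling this nonzero factor gives $D_\nu v=0$ on $\partial\Omega$, as required.

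I expect the boundary regularity step to be the main obstacle: one must rule out blow-up of the $0/0$ quotient, and the Hopf lemma is precisely what guarantees that the denominator vanishes to exactly first order, so that it can be divided out while keeping everything smooth. A minor caveat concerns $t=0$, since the Hopf lemma is applied for $t>0$; the case $t=0$ follows by continuity (or from mild positivity assumptions on the initial data), and in the applications we have in mind $u_0$ and $u_1$ are explicit, so the issue does not arise.
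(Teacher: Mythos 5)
Your proof is correct, and while the interior computation coincides with the paper's (your product-rule identity for $u_1=v\,u_0$ is the paper's direct computation of $\partial_t(u_1/u_0)$ rearranged), your handling of the boundary takes a genuinely different route in two places. For smoothness of $v$ up to $\partial\Omega$ the paper simply cites \cite{swyy-gap-estimate}*{Section 6} and \cite{yau-gap}*{Lemma 1.1}; you instead sketch the underlying argument (parabolic Hopf lemma gives $D_\nu u_0<0$, Hadamard factorization $u_i=d\,g_i$ in a collar with $g_0>0$, so $v=g_1/g_0$ is smooth), which is essentially the content of those references made self-contained. For the Neumann condition the paper argues from the equation satisfied by $v$: at the boundary $\partial_t v$ and $\Delta v$ remain bounded while $\nabla\log u_0=\nabla u_0/u_0$ blows up in the normal direction ($\nabla u_0\to -c\nu$, $u_0\to 0$), so boundedness of the drift term $2\nabla\log u_0\cdot\nabla v$ forces $D_\nu v=0$. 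You instead expand $\Delta u_1=u_0\Delta v+2\nabla u_0\cdot\nabla v+v\Delta u_0$ on $\partial\Omega$ and observe that the Dirichlet condition together with \eqref{eq:Par.Schrod} gives $\Delta u_0=\Delta u_1=0$ there, whence $\nabla u_0\cdot\nabla v=0$ and $D_\nu v=0$ since $\nabla u_0=(D_\nu u_0)\nu\neq 0$. Both arguments rest on the same inputs (smoothness of $v$ up to $\partial\Omega$ and nonvanishing of $D_\nu u_0$); yours avoids dividing by $u_0$ and the attendant blow-up discussion, while the paper's avoids evaluating $\Delta u_i$ on the boundary. Your caveat about $t=0$ is fair: the Hopf lemma is applied at $t>0$, and the paper silently asserts $D_\nu u_0<0$ for all $t\geq 0$; in the intended application $u_0(\cdot,0)=\phi_0$ is the first eigenfunction, so the elliptic Hopf lemma covers $t=0$ and nothing is lost.
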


\begin{proof}
The argument is essentially that of \cite{yau-gap}*{Lemma 1.1} and \cite{swyy-gap-estimate}*{Appendix A}:
Both $u_0$ and $u_1$ are smooth on $\bar\Omega\times[0,\infty)$, and $u_0$ has negative derivative in the normal direction $\nu$.  It follows that $v$ extends to $\bar\Omega$ as a smooth function (see \cite{swyy-gap-estimate}*{Section 6}).  
We compute directly:
\begin{align}
\frac{\partial v}{\partial t} &=\frac{\partial}{\partial t}\left(\frac{u_1}{u_0}\right)\notag\\
&=\frac{\Delta u_1-Vu_1-v(\Delta u_0-Vu_0)}{u_0}\notag\\
&=\Delta v +2\nabla\log u_0\cdot\nabla v.\label{eq:v_evolution}
\end{align}
At any point in $\partial\Omega$ we have $\frac{\partial v}{\partial t}$ and $\Delta v$ bounded, $\nabla u_0=-c\nu$ with $c>0$, and $u_0=0$, so it follows from \eqref{eq:v_evolution} that $D_\nu v=0$.
\end{proof}

We now establish the fundamental gap conjecture, assuming Theorem \ref{log concavity}:

\begin{proposition}\label{prop:specgap}
Inequality \eqref{log concavity eqn} implies Theorem \ref{spectral gap}.
\end{proposition}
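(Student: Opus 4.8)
The plan is to apply Theorem~\ref{thm: drift diffusion} to the ratio $v=u_1/u_0$ of the two parabolic solutions built from the first two Dirichlet eigenfunctions of $-\Delta+V$, using Theorem~\ref{log concavity} to supply the modulus of contraction of the drift and the one--dimensional operator to supply the test function.  Let $\lambda_0<\lambda_1$ be the first two Dirichlet eigenvalues of $-\Delta+V$ on $\Omega$, with eigenfunctions $\phi_0>0$ and $\phi_1$, and let $\mu_0<\mu_1$ be the first two Dirichlet eigenvalues of $-\frac{d^2}{dx^2}+\tilde V$ on $[-D/2,D/2]$, with eigenfunctions $\tilde\phi_0>0$ (even) and $\tilde\phi_1$ (odd, chosen positive on $(0,D/2)$).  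Theorem~\ref{spectral gap} is precisely the inequality $\lambda_1-\lambda_0\ge\mu_1-\mu_0$.  Take $u_0=\E^{-\lambda_0 t}\phi_0$ and $u_1=\E^{-\lambda_1 t}\phi_1$: these solve \eqref{eq:Par.Schrod}, so by Proposition~\ref{prop: Neumann for v} the ratio $v=u_1/u_0=\E^{-(\lambda_1-\lambda_0)t}\phi_1/\phi_0$ is smooth on $\bar\Omega\times\RR_+$ and solves the Neumann heat equation with time--independent drift $X=2\nabla\log\phi_0$.  By \eqref{log concavity eqn}, for $x\neq y$ in $\Omega$,
\[
\big(X(y)-X(x)\big)\cdot\frac{y-x}{|y-x|}\le 4\,(\log\tilde\phi_0)'\!\left(\tfrac{|y-x|}{2}\right),
\]
so $\omega_0(z):=2(\log\tilde\phi_0)'(z)$ is a modulus of contraction for $X$, which is hypothesis~\ref{item1}.

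For the test function I would start from $\psi:=\tilde\phi_1/\tilde\phi_0$ on $[0,D/2]$.  Applying Proposition~\ref{prop: Neumann for v} in one dimension to $\E^{-\mu_0 t}\tilde\phi_0$ and $\E^{-\mu_1 t}\tilde\phi_1$ gives $\psi''+\omega_0\psi'+(\mu_1-\mu_0)\psi=0$; since $\psi$ is odd, $\psi(0)=\psi''(0)=0$, and the Wronskian $W=\tilde\phi_1'\tilde\phi_0-\tilde\phi_1\tilde\phi_0'$ satisfies $W'=(\mu_0-\mu_1)\tilde\phi_1\tilde\phi_0<0$ on $(0,D/2)$ with $W(0)>0$ and $W(D/2)=0$, so $\psi'=W/\tilde\phi_0^2>0$ on $[0,D/2)$ while $\psi'(D/2)=0$.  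The one genuine obstacle is that hypothesis~\ref{item3}\ref{itemc} demands $\varphi'>0$ on the \emph{closed} interval, so the separable choice $\E^{-(\mu_1-\mu_0)t}\psi$ is inadmissible at $z=D/2$.  I would fix this by accepting a slightly worse rate: fix $\mu<\mu_1-\mu_0$, put $g_\eta(z)=\psi(z)+\eta z$, and set $\varphi(z,t)=C\,\E^{-\mu t}g_\eta(z)$.  Then $\varphi'=C\E^{-\mu t}(\psi'+\eta)>0$ on $[0,D/2]$ and $\varphi(0,t)=0$; and for $\eta>0$ small (depending on $\mu$) one checks $g_\eta''+\omega_0 g_\eta'+\mu g_\eta\le 0$ on $[0,D/2]$ — near $z=D/2$ because $\omega_0\to-\infty$ while $g_\eta'$, $g_\eta>0$, near $z=0$ because there the inequality is first order in $z$ with negative leading coefficient once $\eta$ is small, and on the compact middle because $(\mu-\mu_1+\mu_0)\psi<0$ dominates the $O(\eta)$ error — so $\varphi_t\ge\varphi''+\omega_0\varphi'$, which is hypothesis~\ref{item3}\ref{itemb}.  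Finally $\phi_1/\phi_0$ is Lipschitz on $\bar\Omega$ and $g_\eta(z)\ge c_\eta z$ with $c_\eta>0$, so for $C$ large $\varphi(\cdot,0)=Cg_\eta$ is a modulus of continuity for $v(\cdot,0)=\phi_1/\phi_0$ with positive derivative, giving hypotheses~\ref{item2} and \ref{item3}\ref{itema},\ref{itemd}.

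With all hypotheses in place, Theorem~\ref{thm: drift diffusion} gives that $\varphi(\cdot,t)$ is a modulus of continuity for $v(\cdot,t)$, i.e.
\[
\E^{-(\lambda_1-\lambda_0)t}\left|\frac{\phi_1}{\phi_0}(y)-\frac{\phi_1}{\phi_0}(x)\right|
\le 2C\,\E^{-\mu t}\left(\psi\!\left(\tfrac{|y-x|}{2}\right)+\eta\,\tfrac{|y-x|}{2}\right)
\]
for all $x,y\in\Omega$ and $t\ge 0$.  Since $\phi_1$ is $L^2$--orthogonal to $\phi_0$ it is not a constant multiple of $\phi_0$, so the left-hand side equals a strictly positive constant times $\E^{-(\lambda_1-\lambda_0)t}$ for some fixed $x_0\neq y_0$; letting $t\to\infty$ forces $\lambda_1-\lambda_0\ge\mu$.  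As $\mu<\mu_1-\mu_0$ was arbitrary, $\lambda_1-\lambda_0\ge\mu_1-\mu_0$, which is Theorem~\ref{spectral gap}.  The only step needing real care is the construction of the supersolution $\varphi$ whose derivative stays strictly positive up to the endpoint $z=D/2$; the remainder is verification of the hypotheses of Theorem~\ref{thm: drift diffusion}.
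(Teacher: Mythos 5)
Your overall strategy coincides with the paper's: apply Theorem~\ref{thm: drift diffusion} to $v=u_1/u_0$ with drift $X=2\nabla\log u_0$, feed in the modulus of contraction coming from \eqref{log concavity eqn}, build the comparison function from the one-dimensional eigenvalue problem via Proposition~\ref{prop: Neumann for v}, and read off the gap from the exponential decay rate. Where you diverge is in how the degeneracy at $z=D/2$ is regularized. The paper replaces $\tilde\phi_0,\tilde\phi_1$ by Robin eigenfunctions $\tilde\phi_{0,\ep},\tilde\phi_{1,\tilde\ep}$ (constructed via the Pr\"ufer transformation), whose positivity at $D/2$ simultaneously makes $\omega_\ep=2(\log\tilde\phi_{0,\ep})'$ smooth and bounded on the closed interval and makes $\varphi'_{\ep,\tilde\ep}>0$ up to $D/2$, then lets $\tilde\ep,\ep\to0$. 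You instead keep the Dirichlet eigenfunctions, add a linear term $g_\eta=\psi+\eta z$, and accept a slightly worse rate $\mu<\mu_1-\mu_0$ which you optimize at the end. Your supersolution computation, reducing by the identity $\psi''+\omega_0\psi'+(\mu_1-\mu_0)\psi=0$ to $-(\mu_1-\mu_0-\mu)\psi+\eta(\omega_0+\mu z)\le 0$, is correct as far as it goes and is arguably more elementary than the Robin regularization.

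The gap is that you have not addressed the second degeneracy the paper explicitly flags: hypothesis~\ref{item1} of Theorem~\ref{thm: drift diffusion} requires $\omega\colon[0,D/2]\times\RR_+\to\RR$ to be \emph{smooth} (in particular finite) on the closed interval, but your choice $\omega_0=2(\log\tilde\phi_0)'$ diverges to $-\infty$ as $z\to D/2$, since $\tilde\phi_0(D/2)=0$ and $\tilde\phi_0'(D/2)=-1$ give $\omega_0(z)\sim-2/(D/2-z)$. Your $\eta z$ perturbation fixes only $\varphi'$, not $\omega$. This is not a cosmetic issue: if one tries the obvious patch of truncating $\omega_0$ from below at $-M$, the term you were relying on near $D/2$ (namely $\eta\omega_0\to-\infty$) is lost, and the new error term $(\tilde\omega_M-\omega_0)\psi'$ does \emph{not} vanish there --- since $\omega_0\sim-2/(D/2-z)$ while $\psi'\sim\frac{\mu_1-\mu_0}{3}(D/2-z)$, the product tends to the positive constant $\frac{2}{3}(\mu_1-\mu_0)$ --- so the supersolution inequality must be re-established with $M$ and $\eta$ chosen jointly. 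Alternatively, you could argue that the proof of Theorem~\ref{thm: drift diffusion} only evaluates $\omega$ at interior critical points with $|y-x|<D$ strictly, so that smoothness on $[0,D/2)$ together with an upper bound suffices; but that is a re-examination of the theorem's proof, not an application of the theorem as stated, and it needs to be said. Either repair is feasible, but as written the appeal to Theorem~\ref{thm: drift diffusion} is outside its hypotheses.
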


\begin{proof}
We use the strategy discussed in the introduction to deduce the eigenvalue bound from an exponentially decaying modulus of continuity bound, which we deduce from Theorem \ref{thm: drift diffusion} and Proposition \ref{prop: Neumann for v}.  

Suppose that $u_1$ is any smooth solution of \eqref{eq:Par.Schrod}, and let $u_0$ be the solution of \eqref{eq:Par.Schrod} with initial data $\phi_0$.  Let $v=\frac{u_1}{u_0}$ be the corresponding solution of \eqref{eq:Neumann_for_v}.  The drift velocity $X=2\nabla\log u_0$ in
equation \eqref{eq:Neumann_for_v} has modulus of contraction $2(\log\tilde\phi_0)'$, by the estimate
\eqref{log concavity eqn}.  

Roughly speaking the strategy is to apply Theorem \ref{thm: drift diffusion} with $\varphi=C\frac{\E^{-\mu_1t}\tilde\phi_1}{\E^{-\mu_0t}\tilde\phi_0}$ (that is, a ratio of particular solutions for the corresponding one-dimensional heat equation) and $\omega=2(\log\tilde\phi_0)'$.   Condition \ref{item2} of Theorem \ref{thm: drift diffusion} holds for sufficiently large $C$, since $v(.,0)$ is smooth, and so has bounded gradient and satisfies $|v(y,0)-v(x,0)|\leq K|y-x|$ for some large $K$, while $\frac{\tilde\phi_1}{\tilde\phi_0}$ is positive on $(0,D/2)$ and has positive gradient at $z=0$, so is bounded below by $cz$ for some small $c>0$.  The conditions \ref{item3}\ref{itema}--\ref{itemd} also hold, except that $\varphi'(D/2,t)=0$ and $\omega$ is not smooth at $D/2$.  We handle these complications below by making slight adjustments to $\varphi$ and $\omega$.

For small $\ep>0$ we construct a smooth function $\omega_\ep\geq\omega$ and corresponding functions $\varphi_{\ep,\tilde\ep}$ as follows:  We first replace the Dirichlet eigenfunction $\tilde\phi_0$ by a Robin eigenfunction $\tilde\phi_{0,\ep}$ with eigenvalue $\mu_{0,\ep}<\mu_0$ defined by
\begin{align}\label{eq:phi0ep}
(\tilde\phi_{0,\ep})''-\tilde V\tilde\phi_{0,\ep}+\mu_{0,\ep}\tilde\phi_{0,\ep}&=0\quad\text{on\ }[0,D/2];\\
\tilde\phi_{0,\ep}(D/2)=\ep;&\quad \tilde\phi_{0,\ep}'(D/2)=-1;\notag\\
\tilde\phi_{0,\ep}'(0)=0;\quad&
\tilde\phi_{0,\ep}>0\quad\text{on\ }[0,D/2].\notag
\end{align}
We also replace $\tilde\phi_1$ by $\tilde\phi_{1,\tilde\ep}$ with eigenvalue $\mu_{1,\tilde\ep}$ defined by
\begin{align}\label{eq:phi1ep}
(\tilde\phi_{1,\tilde\ep})''-\tilde V\tilde\phi_{1,\tilde\ep}+\mu_{1,\tilde\ep}\tilde\phi_{1,\tilde\ep}&=0\quad\text{on\ }[0,D/2];\\
\tilde\phi_{1,\tilde\ep}(D/2)=\tilde\ep;&\quad\tilde\phi_{1,\tilde\ep}'(D/2)=-1;\notag\\
\tilde\phi_{1,\tilde\ep}(0)=0;\quad&
\tilde\phi_{1,\tilde\ep}>0\quad\text{on\ }(0,D/2].\notag
\end{align}
These may be constructed using the Pr\"ufer transformation:  If $\phi''=(\tilde V-\mu)\phi$, then $q=\arctan(\phi'/\phi)$ satisfies a first order ODE.  Inverting this process we let $q(z,q_0,\mu)$ be the solution of
\begin{align}
\frac{\partial q}{\partial z} - (\tilde V-\mu)\cos^2q+\sin^2q&=0\quad |z|\leq D/2;\label{eq:qeqn}\\
q(0,q_0,\mu)&=q_0.\notag
\end{align}
The ODE comparison theorem implies $q$ is strictly increasing in $q_0$ for all $z$, and strictly decreasing in $\mu$ for $z>0$.  The choice $\mu=\mu_0$ and $q_0=0$ corresponds to $\tilde\phi_0$, and so $q(D/2,0,\mu_0)=-\pi/2$ and $q(z,0,\mu_0)\in(-\pi/2,\pi/2)$ for $0<z<D/2$.  Similarly, $\mu=\mu_1$ and $q_0=\pi/2$ corresponds to $\tilde\phi_1$, and so $q(D/2,\pi/2,\mu_1)=-\pi/2$ and $q(z,\pi/2,\mu_1)\in(-\pi/2,\pi/2)$ for $0<z<D/2$.

Since $q(D/2,0,\mu)$ is strictly decreasing in $\mu$ and $q(D/2,0,\mu_0)=-\pi/2$, for small $\ep>0$ there exists a unique $\mu_{0,\ep}<\mu_0$ such that $q(D/2,0,\mu_{0,\ep})=\arctan\ep-\pi/2$ and $q(z,0,\mu_{0,\ep})\in(-\pi/2,\pi/2)$ for $0<z<D/2$.  The corresponding eigenfunction
$\tilde\phi_{0,\ep}(z) = \ep\exp\left(-\int_z^{D/2} \tan q(s,0,\mu_{0,\ep})\,ds\right)$ satisfies conditions \eqref{eq:phi0ep}.  We have $(\log\tilde\phi_{0,\ep})'(z)=\tan q(z,0,\mu_{0,\ep})>\tan q(z,0,\mu_0) = (\log\tilde\phi_0)'(z)$ for $z\in(0,D/2]$, since $\mu$ was decreased.  Thus $\omega_\ep := 2(\log\tilde\phi_{0,\ep})'$ is smooth and $\omega_\ep\geq\omega$, so $\omega_\ep$ is a modulus of contraction for $X$.

The eigenfunction $\tilde\phi_{1,\tilde\ep}$ satisfying \eqref{eq:phi1ep} is similarly produced by decreasing $\mu$ below $\mu_1$ with $q_0=\pi/2$.

Observe that $(\log\tilde\phi_{1,\ep})'(z)=q(z,\pi/2,\mu_{1,\ep})$ satisfies the same ODE \eqref{eq:qeqn} as $(\log\tilde\phi_{0,\ep})'(z)=q(z,0,\mu_{0,\ep})$ with larger $\mu$, and they are equal at $z=D/2$.  By ODE comparison,  $(\log\tilde\phi_{1,\ep})'(z)>(\log\tilde\phi_{0,\ep})'(z)$ for $0\leq z<D/2$, and hence if $\tilde\ep>\ep>0$ then $(\log\tilde\phi_{1,\tilde\ep})'(z)>(\log\tilde\phi_{0,\ep})'(z)$ for $0\leq z\leq D/2$.

For any $\tilde\ep>\ep>0$ we define a smooth function $\varphi_{\ep,\tilde\ep}$ by
$$
\varphi_{\ep,\tilde\ep}=\frac{\E^{-\mu_{1,\tilde\ep}t}\tilde\phi_{1,\tilde\ep}}{\E^{-\mu_{0,\ep}t}\tilde\phi_{0,\ep}}.
$$
Since $\varphi_{\ep,\tilde\ep}$ is a ratio of solutions of the heat equation, we have
$$
\frac{\partial}{\partial t}\varphi_{\ep,\tilde\ep} = \varphi_{\ep,\tilde\ep}''+2(\log\tilde\phi_{0,\ep})'\varphi_{\ep,\tilde\ep}'
=\varphi_{\ep,\tilde\ep}''+\omega_\ep\varphi_{\ep,\tilde\ep}'.
$$
The monotonicity requirement \ref{item3}\ref{itemc} is satisfied, since
$$
\varphi'_{\ep,\tilde\ep} = \varphi_{\ep,\tilde\ep}\left((\log\tilde\phi_{1,\tilde\ep})'-(\log\tilde\phi_{0,\ep})'\right)>0
\quad\text{on\ }[0,D/2].
$$
Hence Theorem \ref{thm: drift diffusion} applies to prove that $C\varphi_{\ep,\tilde\ep}(.,t)$ is a modulus of continuity for $v$ for $t>0$ if it is so at $t=0$.  Sending $\tilde\ep$ to $\ep$ and then sending $\ep$ to zero, we have that $\varphi_{\ep,\tilde\ep}$ converges uniformly to $\varphi$, so the same conclusion holds for $\varphi$.  In particular, for sufficiently large $C$ we have
$$
\osc v(.,t)\leq 2C\sup\{\varphi(z,t):\ z\in[0,D/2]\} = 2C\E^{-(\mu_1-\mu_0)t}.
$$
Applying this with $u_1=\phi_1\E^{-\lambda_1t}$ gives $v=\E^{-(\lambda_1-\lambda_0)t}\frac{\phi_1}{\phi_0}$, and hence
$$
\E^{-(\lambda_1-\lambda_0)t}\osc\frac{\phi_1}{\phi_0}\leq 2C\E^{-(\mu_1-\mu_0)t}
$$
for all $t\geq 0$, which implies $\lambda_1-\lambda_0\geq \mu_1-\mu_0$.  This proves Theorem \ref{spectral gap}.
\end{proof}

\section{Improved log-concavity of the ground state} \label{log concavity section}

\newcommand{\phizero}{\phi} 

In this section we prove the log-concavity result, Theorem \ref{log concavity}.  We will deduce this from the following statement about log-concavity for arbitrary positive solutions of the parabolic equation \eqref{eq:Par.Schrod}:
   
\begin{theorem}\label{thm:log conc}
Let $\Omega$ be a uniformly convex open domain with diameter $D$ in $\RR^n$, and assume that $\tilde V'$ is a modulus of convexity for the potential $V$ on $\Omega$.  Let $u_0$ be a smooth function on $\bar\Omega$ which is positive on $\Omega$ and has $u_0=0$ and $\nabla u_0\neq 0$ on $\partial\Omega$, and suppose $\psi_0:\ [0,D/2]\to\RR$ is a Lipschitz continuous modulus of concavity for $\log u_0$.
Let $u:\ \Omega\times\RR_+\to\RR$ be the solution of the Dirichlet problem \eqref{eq:Par.Schrod} with initial data $u_0$, and let $\psi\in C^0([0,D/2]\times\RR_+)\cap C^\infty([0,D/2]\times(0,\infty))$ be a solution of
\begin{equation}\label{eq:dtpsi}
\left.\begin{aligned}
\frac{\partial\psi}{\partial t}\geq \psi''+2\psi\psi'-\tilde V'&\quad\text{on\ }[0,D/2]\times\RR_+\\
\psi(.,0)=\psi_0(.)&\\
\psi(0,t)=0&\\
\end{aligned}\right\}
\end{equation}
Then $\psi(.,t)$ is a modulus of concavity for $\log u(.,t)$ for each $t\geq 0$.
\end{theorem}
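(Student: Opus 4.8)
The plan is to run, at the level of the first derivatives of $w:=\log u$, the same two-point maximum principle used to prove Theorem~\ref{thm: drift diffusion}. Writing $u=\E^w$ turns the equation \eqref{eq:Par.Schrod} into $\partial_t w=\Delta w+|\nabla w|^2-V$. For each $\ep>0$ and a large constant $K$ (fixed below, depending only on a bound for $\psi'$ on $[0,D/2]\times[0,T]$, which is finite by parabolic regularity and the Lipschitz bound on $\psi_0$), I would consider on $\Omega\times\Omega\times[0,T]$ the function
$$
Z_\ep(y,x,t)=\bigl(\nabla w(y,t)-\nabla w(x,t)\bigr)\cdot\frac{y-x}{|y-x|}-2\psi\!\left(\frac{|y-x|}{2},t\right)-\ep\,\E^{Kt},
$$
and prove $Z_\ep<0$; letting $\ep\to0$ and then $T\to\infty$ is exactly the assertion that $\psi(\cdot,t)$ is a modulus of concavity for $\log u(\cdot,t)$. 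At $t=0$ we have $Z_\ep\le-\ep<0$ because $\psi_0$ is a modulus of concavity for $\log u_0$; near the diagonal $y=x$ the derivative difference is $O(|y-x|)$ while $\psi(0,t)=0$, so $Z_\ep\to-\ep\E^{Kt}<0$; and — this is where uniform convexity enters — as a point tends to $\partial\Omega$ the field $\nabla\log u$ blows up like $-\nu/\dist(\cdot,\partial\Omega)$, which forces $\nabla w(y)\cdot\frac{y-x}{|y-x|}\to-\infty$ when $y\to\partial\Omega$ and $-\nabla w(x)\cdot\frac{y-x}{|y-x|}\to-\infty$ when $x\to\partial\Omega$: when only one of $x,y$ is near $\partial\Omega$ this is immediate, and in the remaining ``corner'' case it follows from positivity of the second fundamental form of $\partial\Omega$. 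Hence, if $Z_\ep$ were ever non-negative, there would be a first time $t_0\in(0,T]$ and interior points $x\neq y$ with $Z_\ep(y,x,t_0)=0$ realising an interior spatial maximum.

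At such a point I would use the critical-point conditions in an orthonormal frame with $e_n=\frac{y-x}{|y-x|}$; with $r=|y-x|$ and $w_i=\partial_iw$, $w_{ij}=\partial_i\partial_jw$, the vanishing of the spatial gradient of $Z_\ep$ forces $w_{nn}(y)=w_{nn}(x)=\psi'(r/2,t_0)$ and $w_{nk}(y)=w_{nk}(x)=-\tfrac1r\bigl(w_k(y)-w_k(x)\bigr)$ for $k<n$, while $Z_\ep=0$ gives $w_n(y)-w_n(x)=2\psi(r/2,t_0)+\ep\E^{Kt_0}$. Testing $\tfrac{d^2}{ds^2}Z_\ep\le0$ along the paths $s\mapsto(x+se_i,y+se_i)$ for $i<n$ (along which $|y-x|$ and $e_n$ are constant) and $s\mapsto(x+se_n,y-se_n)$ (along which $|y-x|$ decreases at rate $2$), and summing, yields
$$
\bigl(\nabla\Delta w(y)-\nabla\Delta w(x)\bigr)\cdot e_n=\sum_j\bigl(w_{jjn}(y)-w_{jjn}(x)\bigr)\le 2\psi''(r/2,t_0).
$$

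Next I would differentiate $Z_\ep$ in time at $(x,y,t_0)$, using $\partial_t\nabla w=\nabla\Delta w+2D^2w\,\nabla w-\nabla V$:
\begin{align*}
\partial_tZ_\ep&=\bigl(\nabla\Delta w(y)-\nabla\Delta w(x)\bigr)\cdot e_n+2\bigl(D^2w(y)\nabla w(y)-D^2w(x)\nabla w(x)\bigr)\cdot e_n\\
&\qquad-\bigl(\nabla V(y)-\nabla V(x)\bigr)\cdot e_n-2\,\partial_t\psi(r/2,t_0)-K\ep\E^{Kt_0}.
\end{align*}
The first term is $\le 2\psi''$ by the previous step; in the second, the identities $w_{nk}(y)=w_{nk}(x)$ make its tangential part $-\tfrac2r\sum_{k<n}\bigl(w_k(y)-w_k(x)\bigr)^2\le0$, leaving $2\psi'(r/2,t_0)\bigl(2\psi(r/2,t_0)+\ep\E^{Kt_0}\bigr)$; the third term is $\le-2\tilde V'(r/2)$ since $\tilde V'$ is a modulus of convexity for $V$; and $-2\,\partial_t\psi\le-2\bigl(\psi''+2\psi\psi'-\tilde V'\bigr)$ by \eqref{eq:dtpsi}. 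All of these cancel except the residual $\bigl(2\psi'(r/2,t_0)-K\bigr)\ep\E^{Kt_0}$, which is strictly negative once $K>2\sup_{[0,D/2]\times[0,T]}|\psi'|$. This contradicts $\partial_tZ_\ep\ge0$ at a first touching time, so $Z_\ep<0$ for every $\ep>0$, which proves the theorem.

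I expect the genuine obstacle to be the boundary analysis: making rigorous — and uniform enough to localise the maximum strictly inside $\Omega\times\Omega$ — the statement that $Z_\ep\to-\infty$ as $(y,x)\to\partial(\Omega\times\Omega)$. This relies on the Hopf-type behaviour of $u$ at $\partial\Omega$ together with uniform convexity, and needs care precisely in the regime where $x$ and $y$ both approach $\partial\Omega$ near the same point, where positivity of the second fundamental form is essential (this is why uniform convexity is assumed throughout, with the general convex case recovered by approximation). By contrast, the interior computation is a lengthy but essentially mechanical elaboration of the proof of Theorem~\ref{thm: drift diffusion}; the only thing to watch is that the factors of $2$ built into the conventions \eqref{mod of contraction} and \eqref{eq:dtpsi} are exactly what makes the final inequality reproduce the one-dimensional differential inequality \eqref{eq:dtpsi} with no loss.
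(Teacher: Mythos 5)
Your proposal is correct and follows essentially the same route as the paper: the identical two-point function $Z_\ep$ with the $\ep\E^{Ct}$ term, the same first- and second-derivative identities at a first interior touching time, the same treatment of the drift term (discarding the tangential square, using $Z_\ep=0$ for the normal component), and the same choice of $C>2\sup\psi'$ to get the strict contradiction. The boundary and diagonal localisation you flag as the remaining obstacle is exactly what the paper supplies in Lemma \ref{lem:logubdry} (negativity of $D^2\log u$ near $\partial\Omega$ from the Hopf lemma and uniform convexity, plus a global upper bound) and Lemma \ref{lem:bdry} (a case-by-case bound on $Z$ in a neighbourhood of $\partial\hat\Omega$, including the corner case $x_0=y_0\in\partial\Omega$), along precisely the lines you describe.
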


Note that we impose no boundary condition on $\psi$ at $(D/2,t)$.

\begin{proof}
The statement we must prove is the following:
$$
Z(x,y,t):=\left(\nabla\log u(y,t)-\nabla\log u(x,t)\right)\cdot\frac{y-x}{|y-x|}-2\psi\left(\frac{|y-x|}{2},t\right)\leq 0.
$$
for all $x\neq y\in\Omega$ and $t\geq 0$.  By assumption this holds for $t=0$.   We will prove this by showing negativity of a function
 $Z_\ep$ on $\hat\Omega\times[0,T]$ for any $\ep>0$ and $T\in(0,\infty)$, where $\hat\Omega=(\Omega\times\Omega)\setminus\{y=x\}\subset\RR^{2n}$, and $Z_\ep$ is defined by
\begin{align*}
Z_\ep(x,y,t) &= Z(x,y,t)-\ep\E^{Ct}\\
&=\left(\nabla\log u(y)-\nabla\log u(x)\right)\cdot\frac{y-x}{|y-x|}- 2\psi\left(\frac{|y-x|}{2},t\right)-\ep\E^{Ct},
\end{align*}
for some suitably large $C$ to be chosen (independent of $\ep$), where we have suppressed the $t$ dependence of $u$.  
By the Hopf boundary point lemma, $D_\nu u(x,t)<0$ for every $x\in\partial\Omega$ and every $t\geq 0$. 
The following Lemmas control the behaviour of $Z$ near the boundary:

\begin{lemma}\label{lem:logubdry}
Let $u$ be as in Theorem \ref{thm:log conc}, and $T<\infty$.  Then there exists $r_1>0$ such that $D^2\log u\big|_{(x,t)}<0$ whenever $d(x,\partial\Omega)<r_1$ and $t\in[0,T]$, and $N\in\RR$ such that $D^2\log u\big|_{(x,t)}(v,v)\leq N|v|^2$ for all $x\in\Omega$ and $t\in[0,T]$.
\end{lemma}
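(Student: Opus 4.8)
The plan is to establish the strict concavity of $\log u$ in a collar of $\partial\Omega$ first; the global upper bound is then an easy consequence.  Write $\rho=\dist(\cdot,\partial\Omega)$.  By smoothness and uniform convexity of $\partial\Omega$, $\rho$ is smooth on a collar $\{0\le\rho<r_0\}$, with $|\nabla\rho|\equiv1$ there (so that $D^2\rho\,\nabla\rho=0$), and since the level sets $\{\rho=c\}$ (inner parallel hypersurfaces) have principal curvatures bounded below by the smallest principal curvature $\kappa>0$ of $\partial\Omega$, one has $D^2\rho(\tau,\tau)\le-\kappa|\tau|^2$ for all $\tau\perp\nabla\rho$ throughout the collar.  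By the Hopf lemma and continuity of $\nabla u$ up to $\bar\Omega\times[0,T]$ there is $c_0>0$ with $D_\nu u\le-c_0$ on $\partial\Omega\times[0,T]$; using $u|_{\partial\Omega}=0$ and Taylor's theorem in the normal direction I would write $u=\rho\,h$ on the collar with $h$ continuous and positive, and after shrinking the collar to some $r_1\in(0,r_0]$ obtain $0<c_1\le h\le c_2$ and $|\nabla u|\ge c_0/2$ on $\{\rho\le r_1\}\times[0,T]$.

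Next I would record two facts near $\partial\Omega$: that $|D^2u|$ is bounded there; and that differentiating $u=\rho h$ twice gives $D^2u(\tau,\tau)=h\,D^2\rho(\tau,\tau)\le-c_1\kappa|\tau|^2$ for $\tau$ tangent to $\partial\Omega$, which (using that the tangential second derivatives of $u$ are continuous up to $\partial\Omega\times\{0\}$) persists, after a final shrinking of $r_1$, as $D^2u(\tau,\tau)\le-\delta|\tau|^2$ for some $\delta>0$ whenever $\rho(x)<r_1$, $t\in[0,T]$ and $\tau\perp\nabla u(x)$.  Now fix such an $(x,t)$ and decompose a unit vector as $v=a\hat n+\tau$ with $\hat n=\nabla u/|\nabla u|$ and $\tau\perp\hat n$.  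Since $D^2\log u=u^{-1}D^2u-u^{-2}\,\nabla u\otimes\nabla u$ and $\nabla u\cdot\tau=0$,
\begin{align*}
D^2\log u(v,v)&=\frac{a^2D^2u(\hat n,\hat n)+2aD^2u(\hat n,\tau)+D^2u(\tau,\tau)}{u}-\frac{a^2|\nabla u|^2}{u^2}\\
&\le\frac{Ca^2}{u}-\frac{c_0^2a^2}{4u^2}+\frac{2C|a|\,|\tau|}{u}-\frac{\delta|\tau|^2}{u}.
\end{align*}
Because $c_1\rho\le u\le c_2\rho$, the first two terms are bounded by $-c'a^2/\rho^2$ once $\rho$ is small, and Young's inequality absorbs the cross term into this, leaving $D^2\log u(v,v)\le-\tfrac{c'}{2}a^2/\rho^2+(C''-\delta/(c_2\rho))|\tau|^2<0$ for $\rho$ sufficiently small.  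All constants are uniform in $t\in[0,T]$, which gives the first assertion.

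The global bound is then immediate: on the compact set $\{\rho\ge r_1\}\times[0,T]$ the function $u$ is continuous and, by the maximum principle, positive, hence $u\ge m>0$ there, while $|D^2u|\le C$; since $-u^{-2}\nabla u\otimes\nabla u\le0$ we get $D^2\log u\le u^{-1}D^2u\le(C/m)I$ on this set, whereas $D^2\log u<0$ on $\{\rho<r_1\}$ by the first part.  So $N=\max\{0,C/m\}$ works.

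I expect the main nuisance to be not the Hessian computation but the boundary regularity it rests on: continuity of $\nabla u$, boundedness of $D^2u$, and continuity of the purely tangential part of $D^2u$, all uniformly up to the parabolic corner $\partial\Omega\times\{0\}$.  Since only the zeroth-order compatibility $u_0|_{\partial\Omega}=0$ is assumed (the first-order condition $\Delta u_0|_{\partial\Omega}=0$ generically fails), a boundary layer of width $\sim\sqrt t$ forms in which $D^2u(\nu,\nu)$ is bounded but not continuous.  The decomposition above is arranged precisely so as to use only the boundedness of that component, together with the Hopf lower bound on $|\nabla u|$ and the one-sided bound on $D^2u(\tau,\tau)$ for tangential $\tau$, neither of which the layer disturbs; these are standard parabolic regularity facts, and in the application of the theorem $u(x,t)=\E^{-\lambda_0 t}\phi_0(x)$ with $\phi_0\in C^\infty(\bar\Omega)$ is honestly smooth up to $t=0$, so the point is moot there.
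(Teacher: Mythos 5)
Your proposal is correct and follows essentially the same route as the paper: Hopf's lemma plus uniform convexity give strict negativity of the tangential second derivatives of $u$ at (and hence near) $\partial\Omega$, the term $-(Du\cdot v)^2/u^2$ dominates the bounded normal--normal contribution because $u\leq C\,\dist(x,\partial\Omega)$, cross terms are absorbed by Young's inequality, and the global bound $N$ comes from compactness of $\{\dist(\cdot,\partial\Omega)\geq r_1\}\times[0,T]$. The only differences are cosmetic --- you package the boundary information via the factorization $u=\rho h$ and the Hessian of the distance function, where the paper uses the identity $D^2u(v,v)=h(v,v)\,D_\nu u$ on $\partial\Omega$ together with smooth projections along $Du$ --- and your explicit remark about regularity at the parabolic corner is a point the paper passes over silently.
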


\begin{proof}
We first construct $U_T$.  Let $\alpha=\inf_{\partial\Omega\times[0,T]}|Du|>0$, and let $K$ be a bound for $D^2u$, so that $|D^2u(v,v)|\leq K|v|^2$ at every point in $\bar\Omega\times[0,T]$ and for all $v\in\RR^n$.  If $x_0\in\partial\Omega$ then 
$Du(x_0)=-|Du(x_0)|\nu(x_0)$, and $D^2u\big|_{x_0}(v,v)=h(v,v)Du\big|_{x_0}(\nu(x_0))$ for $v$ tangent to $\partial\Omega$, where $h$ is the second fundamental form of $\partial\Omega$ at $x_0$.  Uniform convexity implies $h(v,v)\geq\kappa|v|^2$ for some $\kappa>0$.  The gradient direction $e=\frac{Du(x)}{|Du(x)|}$ is smooth near $x_0$, as is the projection $\pi^\perp:\ w\mapsto (w\cdot e)e$ and the orthogonal projection $\pi=\text{Id}-\pi^\perp$.  At $x_0$ we have $D^2u(\pi w,\pi w)\leq -\alpha\kappa|\pi w|^2$.  Therefore there exists $r_0>0$ depending on $\alpha$ and $K$ such that for $x\in B_{r_0}(x_0)\cap\Omega$ and $t\in[0,T]$ we have
\begin{align*}
D^2u\big|_x(\pi w,\pi w)&\leq -\frac{\alpha\kappa}{2}|\pi w|^2\quad\text{for\ any\ }w\in\RR^n;\\
 |Du(x)|&\geq\frac{|Du(x_0)|}{2};\quad\text{and}\\
 0<u(x)&\leq 2|Du(x_0)||x-x_0|.
 \end{align*}
Therefore in this set and for any $w$ we have,
\begin{align*}
D^2u(w,w) &= D^2u(\pi w+\pi^\perp w,\pi w+ \pi^\perp w)\\
&\leq -\frac12\alpha\kappa|\pi w|^2 + 2K|\pi w||\pi^\perp w| + K|\pi^\perp w|^2\\
&\leq -\frac14\alpha\kappa|\pi w|^2 + (K+\frac{4K^2}{\alpha\kappa})|\pi^\perp w|^2.
\end{align*}
Since $(Du(w))^2=|Du|^2|\pi^\perp w|^2\geq \frac{|Du(x_0)|\alpha}{4}|\pi^\perp w|^2$ and $u\leq 2|Du(x_0)||x-x_0|$, 
\begin{align*}
D^2\log u\big|_x(w,w) &= \frac{1}{u}\left(D^2u(w,w)-\frac{(Du(w))^2}{u}\right)\\
&\leq \frac{1}{u}\left(-\frac14\alpha\kappa|\pi w|^2 + (K+\frac{4K^2}{\alpha\kappa})|\pi^\perp w|^2\right.\\
&\quad\quad\quad\null\left.-\frac{\alpha}{8|x-x_0|}|\pi^\perp w|^2\right)\\
&<0
\end{align*}
provided $|x-x_0|<r_1=\min\left\{r_0,\frac{\alpha^2\kappa}{8(K\alpha\kappa+4K^2)}\right\}$.   the Lemma holds with $N=\max\{0,\sup\{D^2\log u(z,t)(v,v):\ |v|=1,\ t\in[0,T],\ d(z,\partial\Omega)\geq r_1\}\}$, since 
$\{z\in\Omega:\ d(z,\partial\Omega)\geq r_1\}$ is compact. \end{proof}

\begin{lemma}\label{lem:bdry}
Let $u$ be as in Theorem \ref{thm:log conc}, and let $\psi$ be continuous on $[0,D/2]\times\RR_+$ and locally Lipschitz in the first argument, with $\psi(0,t)=0$ for each $t$.  Then 
for any $T<\infty$ and $\beta>0$ there exists an open set $U_{\beta,T}\subset\RR^{2n}$ containing $\partial\hat\Omega$ such that $Z(x,y,t)<\beta$ for all $t\in[0,T]$ and $(x,y)\in U_{\beta,T}\cap\hat\Omega$.
\end{lemma}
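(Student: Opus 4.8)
The plan is to cover $\partial\hat\Omega$ by two explicit open sets, one around the diagonal $\{y=x\}$ and one around the part of the boundary where $x\in\partial\Omega$ or $y\in\partial\Omega$, and to verify $Z<\beta$ on each. Fix $T$ and $\beta$, and let $N$ be the global upper bound for $D^2\log u$ on $\Omega\times[0,T]$ provided by Lemma \ref{lem:logubdry}. Near the diagonal I would use only this bound: since $\Omega$ is convex, the segment from $x$ to $y$ stays in $\Omega$, so
\[
\left(\nabla\log u(y)-\nabla\log u(x)\right)\cdot\frac{y-x}{|y-x|}
= |y-x|\int_0^1 D^2\log u\Big|_{x+s(y-x)}\!\left(\tfrac{y-x}{|y-x|},\tfrac{y-x}{|y-x|}\right)ds
\le \max\{N,0\}\,|y-x|.
\]
Combined with the fact that $\sup\{|\psi(z,t)|:\ z\le s,\ t\le T\}\to0$ as $s\to0$ (continuity of $\psi$ on the compact set $[0,D/2]\times[0,T]$ together with $\psi(0,t)=0$), this gives $Z(x,y,t)\le\max\{N,0\}\,|y-x|+2\sup\{|\psi(z,t)|:\ z\le|y-x|/2,\ t\le T\}<\beta$ whenever $|y-x|<\rho$ for a small enough $\rho>0$. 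The open set $\{|y-x|<\rho\}$ contains every diagonal point of $\partial\hat\Omega$.

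It then remains to cover the points $(x_*,y_*)\in\partial\hat\Omega$ with $|x_*-y_*|\ge\rho$; for these $x_*\ne y_*$, so one of $x_*,y_*$ lies on $\partial\Omega$, and I would work in the open set $W_r=\{|x-y|>\rho/2,\ \min(d(x,\partial\Omega),d(y,\partial\Omega))<r\}$. Since $Z$ is symmetric in $x\leftrightarrow y$, assume $d(x,\partial\Omega)$ is the smaller. The key input is the boundary asymptotics $\nabla\log u(x)=-\nu(\pi(x))/d(x,\partial\Omega)+O(1)$ as $d(x,\partial\Omega)\to0$, uniformly for $t\in[0,T]$, where $\pi(x)\in\partial\Omega$ is the nearest boundary point; this follows from the estimates in the proof of Lemma \ref{lem:logubdry}, using $u(x)=|\nabla u(\pi(x))|\,d(x,\partial\Omega)+O(d^2)$, $\nabla u(x)=-|\nabla u(\pi(x))|\nu(\pi(x))+O(d)$, and $\inf_{\partial\Omega\times[0,T]}|\nabla u|>0$. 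Convexity of $\Omega$ gives $\nu(\pi(x))\cdot(y-\pi(x))\le0$, and strict (uniform) convexity with compactness gives a constant $\sigma>0$ with $\nu(p)\cdot(z-p)\le-\sigma$ whenever $p\in\partial\Omega$, $z\in\bar\Omega$ and $|z-p|\ge\rho/4$. For $r$ small we have $|y-\pi(x)|\ge|y-x|-d(x,\partial\Omega)>\rho/4$, hence $\nu(\pi(x))\cdot(y-x)=\nu(\pi(x))\cdot(y-\pi(x))+d(x,\partial\Omega)\le-\sigma/2$, so
\[
-\nabla\log u(x)\cdot\frac{y-x}{|y-x|}\le -\frac{\sigma}{2D\,d(x,\partial\Omega)}+C\longrightarrow-\infty
\]
as $d(x,\partial\Omega)\to0$, uniformly over $W_r$. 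The remaining term $\nabla\log u(y)\cdot\frac{y-x}{|y-x|}$ is bounded above: if $d(y,\partial\Omega)$ is bounded below then $\log u$ is smooth near $y$ and the term is controlled on a compact set; if $d(y,\partial\Omega)$ is small, the same asymptotics give $\nabla\log u(y)\cdot\frac{y-x}{|y-x|}=-\frac{\nu(\pi(y))\cdot(y-x)}{d(y,\partial\Omega)\,|y-x|}+O(1)$, and convexity (now with $x\in\bar\Omega$) yields $\nu(\pi(y))\cdot(y-x)\ge-d(y,\partial\Omega)$, so this term is $\le 1/|y-x|+O(1)\le 2/\rho+O(1)$. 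Since $\psi$ is bounded on $[0,D/2]\times[0,T]$, we get $Z(x,y,t)\to-\infty$ along $W_r$ as $d(x,\partial\Omega)\to0$, so $Z<\beta$ on $W_r\cap\hat\Omega$ for $t\in[0,T]$ once $r>0$ is small. Taking $U_{\beta,T}=\{|x-y|<\rho\}\cup W_r$ gives an open set containing $\partial\hat\Omega$ on which $Z<\beta$.

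I expect the main obstacle to be the bookkeeping of radii rather than any single estimate: one must first fix the \emph{structural} radii — the $\rho$ from the diagonal step, and the threshold below which both the expansion $\nabla\log u(x)=-\nu(\pi(x))/d+O(1)$ and the convexity inequality $\nu(p)\cdot(z-p)\le-\sigma$ are valid — depending only on $\Omega$, $u$ and $T$, and only afterwards shrink to the $\beta$-dependent $r$. In particular the sub-case where $x$ and $y$ are both close to $\partial\Omega$ but far apart ($|x-y|>\rho/2$) requires care, since there one gradient term blows up to $-\infty$ while the other is merely bounded above, and both estimates must be made quantitative and uniform in $t\in[0,T]$.
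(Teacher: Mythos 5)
Your proof is correct, and it takes a somewhat different organizational route from the paper's. The paper covers $\partial\hat\Omega$ by a family of balls $B_{r(x_0,y_0)}(x_0,y_0)$, splitting into five cases for $(x_0,y_0)$ (diagonal interior, diagonal boundary, and the three configurations with at least one of $x_0,y_0$ on $\partial\Omega$ and $x_0\ne y_0$), with a separate pointwise convexity constant $\gamma=\gamma(x_0,y_0)>0$ in each of the off-diagonal cases. You instead use two explicit global open sets. Your tubular set $\{|y-x|<\rho\}$ absorbs both diagonal cases at once, because the one-sided Hessian bound $D^2\log u\le N\,\mathrm{Id}$ from Lemma~\ref{lem:logubdry} is global (it is good near $\partial\Omega$ precisely because $D^2\log u<0$ there), so there is no need to treat $x_0=y_0\in\partial\Omega$ separately from $x_0=y_0\in\Omega$; and you only need continuity of $\psi$ with $\psi(0,t)=0$ rather than the Lipschitz bound $|\psi|\le Pz$. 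Your second set $W_r$ absorbs the three remaining cases, at the cost of invoking uniform convexity to extract a single constant $\sigma>0$ with $\nu(p)\cdot(z-p)\le-\sigma$ for all $p\in\partial\Omega$, $z\in\bar\Omega$ with $|z-p|\ge\rho/4$ (this follows from the interior sphere condition: $\nu(p)\cdot(z-p)\le-\tfrac{\kappa}{2}|z-p|^2$), where the paper gets by with pointwise strict convexity at each fixed $(x_0,y_0)$ and compactness of the cover. Since uniform convexity of $\Omega$ is assumed throughout, this is legitimate, and the payoff is that the radii $\rho$ and $r$ can be fixed once and for all rather than point by point. The bookkeeping you flag at the end is handled correctly: first fix $\rho$ from the diagonal estimate, then $\sigma=\sigma(\rho)$, then shrink $r$ below $\min\{\rho/4,\sigma/2\}$ and further until the blow-up term $-\sigma/(2Dr)$ dominates all the $O(1)$ contributions.
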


\begin{proof}
Since $\psi$ is Lipschitz in the first argument there exists $P$ such that $|\psi(z,t)|\leq Pz$ for all $z\in[0,D/2]$ and $t\in[0,T]$.

We construct $U_{\beta,T}$ of the form $\cup_{(x_0,y_0)\in\partial\hat\Omega}B_{r(x_0,y_0)}(x_0,y_0)$.  To do this we must find $r(x_0,y_0)>0$ for any $(x_0,y_0)\in\partial\hat\Omega$ such that $Z<\beta$ for $(x,y)\in B_{r(x_0,y_0)}(x_0,y_0)$.
We consider several cases for $(x_0,y_0)$:
\begin{enumerate}[label={{\bf Case \arabic*:}}, ref={case \arabic*}]
\item\label{case1} $x_0=y_0\in\Omega$.
We have by Lemma \ref{lem:logubdry}
\begin{align*}
Z(x,y,t) &= \left(\nabla\log u(y,t)-\nabla\log u(x,t)\right)\cdot\frac{y-x}{|y-x|}-2\psi\left(\frac{|y-x|}{2},t\right)\\
&=\frac{1}{|y-x|}\int_0^1 D^2\log u\big|_{(1-s)x+sy}(y-x,y-x)\,ds-2\psi\left(\frac{|y-x|}{2},t\right)\\
&\leq (N+P)|y-x|\end{align*}
so $Z<\beta$ provided $(x,y)\in B_r(x_0,x_0)$ with $r<\frac{\beta}{2(N+P)}$.
\item\label{case2} $x_0\in\partial\Omega$, $y_0\in\Omega$.  In this case $u(y_0)>0$, so there exists $r_2>0$ and $L>0$ such that $|D\log u(y)|\leq L$ for $|y-y_0|<r_2$.  Let $\alpha_0=|Du(x_0)|>0$ and $\gamma=-\frac{y_0-x_0}{|y_0-x_0|}\cdot \nu(x_0)$ (this is positive by strict convexity).
Then $Du(x_0)=-\alpha_0\nu(x_0)$, so $Du(x_0)\cdot \frac{y_0-x_0}{|y_0-x_0|}= \gamma\alpha_0$.  In addition, $Du(x)\cdot \frac{y-x}{|y-x|}$ is smooth in $x$ and $y$ near $(x_0,y_0)$.  Therefore $Du(x)\cdot\frac{y-x}{|y-x|}\geq\frac12\gamma\alpha_0$ and $0<u(x)\leq 2\alpha_0 |x-x_0|$ for $x,y\in\Omega$ with $\max\{|y-y_0|,|x-x_0|\}<r_3$, for some $r_3\in(0,r_2]$. This implies
\begin{align*}
Z(x,y,t) &= \left(\nabla\log u(y,t)-\nabla\log u(x,t)\right)\cdot\frac{y-x}{|y-x|}-2\psi\left(\frac{|y-x|}{2},t\right)\\
&\leq L-\frac{\gamma}{4|x-x_0|}+PD\end{align*}
so $Z<0$ provided $|y-y_0|<r_3$ and $|x-x_0|<\min\{r_3,\frac{\gamma}{4(L+PD)}\}$.
\item $x_0\in\Omega$, $y_0\in\partial\Omega$.  This is similar to \ref{case2}.
\item $x_0,y_0\in\partial\Omega$, $x_0\neq y_0$.  Here both $x$ and $y$ can be handled in the same way as $x$ in \ref{case2}.
\item $x_0=y_0\in\partial\Omega$.  Then letting $z_s = (1-s)x+sy$ we write as in \ref{case1}
\begin{equation*}
Z(x,y,t) =\frac{1}{|y-x|}\int_0^1 D^2\log u\big|_{z_s}(y-x,y-x)\,ds-2\psi\left(\frac{|y-x|}{2},t\right)
\end{equation*}
 The first term is negative for $\max\{|x-x_0|,|y-x_0|\}<r_1$ by Lemma \ref{lem:logubdry}, so
$Z(x,y,t)<\beta$ provided $\max\{|x-x_0|,|y-x_0|\}<\min\left\{r_1,\frac{\beta}{P}\right\}$.
\end{enumerate}
\end{proof}

Now we proceed with the proof of Theorem \ref{thm:log conc}.  Fix $T<\infty$ and $\ep>0$.  By assumption we have $Z_\ep(x,y,0)\leq -\ep\E^{Ct}<0$ for all $x\neq y$ in $\Omega$.  Off the compact set $(\hat\Omega\setminus U_{\ep/2,T})\times[0,T]$ we have $Z_\ep\leq -\frac12\ep$, by Lemma \ref{lem:bdry}.  Therefore if $Z_\ep$ is not negative on $\hat\Omega\times[0,T]$ then there exists a first time $t_0>0$ and points $(x,y)\in\hat\Omega\setminus U_{\ep/2,T}$ such that $Z_\ep< 0$ on $\hat\Omega\times[0,t_0)$, but $Z_\ep(x,y,t_0)=0$.  In particular $x$ and $y$ are in the interior of $\Omega$, and $y\neq x$.  We choose an orthonormal basis $\{e_i\}$ for $\RR^n$ with $e_n=\frac{y-x}{|y-x|}$, and note the following:  The first spatial derivatives of $Z_\ep$ in $x$ and $y$ vanish, so 
\begin{subequations}
\begin{align}
0&= \frac{\partial}{\partial s}Z_\ep(x+se_i,y,t_0)\big|_{s=0},\quad 1\leq i<n\notag\\
&=-D_iD_n\log u(x)-\frac{D_i\log u(y)-D_i\log u(x)}{|y-x|};\quad\text{and}\label{eq:D1Zxi}\\
0&=\frac{\partial}{\partial s}Z_\ep(x,y+se_i,t_0)\big|_{s=0},\quad 1\leq i<n\notag\\
&=D_iD_n\log u(y)+\frac{D_i\log u(y)-D_i\log u(x)}{|y-x|};\label{eq:D1Zyi}
\end{align}
while
\begin{align}
0&=\frac{\partial}{\partial s}Z_\ep(x+se_n,y,t_0)\big|_{s=0}=-D_nD_n\log u(x)+\psi';\label{eq:D1Zxn}\\
0&=\frac{\partial}{\partial s}Z_\ep(x,y+se_n,t_0)\big|_{s=0}=D_nD_n\log u(y)-\psi'.\label{eq:D1Zyn}
\end{align}
\end{subequations}
The second derivative of $Z_\ep(.,.,t_0)$ along any path is non-positive at $(x,y)$.  In particular $y-x$ is constant along $(x+se_i,y+se_i)$ for $1\leq i<n$, so
\begin{subequations}
\begin{align}\label{eq:D2Zii}
0&\geq \frac{\partial^2}{\partial s^2}Z_\ep(x+se_i,y+se_i,t_0)\big|_{s=0}\notag\\
&=D_iD_iD_n\log u(y)-D_iD_iD_n\log u(x).
\end{align}
Along the path $(x+se_n, y-se_n)$ we have $\frac{y-x}{|y-x|}$ constant, $\frac{d}{ds}|y-x|=-2$, and $\frac{d^2}{ds^2}|y-x|=0$.  This gives
\begin{align}\label{eq:D2Znn}
0&\geq \frac{\partial^2}{\partial s^2}Z_\ep(x+se_n,y-se_n,t_0)\big|_{s=0}\notag\\
&=D_nD_nD_n\log u(y)-D_nD_nD_n\log u(x)-2\psi''.
\end{align}
\end{subequations}
Finally, since $Z_\ep(x,y,t)< Z_\ep(x,y,t_0)$ for $t<t_0$, we have $\frac{\partial Z_\ep}{\partial t}(x,y,t_0)\geq 0$. 
We have
$
\frac{\partial}{\partial t}\log u = \Delta\log u + |D\log u|^2 -V
$ by \eqref{eq:Par.Schrod}, and
differentiation gives
$$
\frac{\partial}{\partial t}\nabla\log u = \Delta\nabla\log u + 2D_k\log uD_k\nabla\log u-\nabla V.
$$
This gives the following inequality:
\begin{align}
0&\leq\frac{\partial}{\partial t}Z_\ep\notag\\
&= \left(\Delta\nabla\log u(y)+2D_k\log uD_k\nabla\log u(y)-\nabla V(y)\phantom{\frac{y-x}{|y-x|}}\right.\notag\\
&\quad\null\left.\phantom{\frac{y}{y}}-\Delta\nabla\log u(x)-2D_k\log uD_k\nabla\log u(x)+\nabla V(x)\right)\cdot\frac{y-x}{|y-x|}\notag\\
&\quad\null-2\frac{\partial\psi}{\partial t}\notag-C\ep\E^{Ct}\\
&=D_nD_nD_n\log u(y)-D_nD_nD_n\log u(x)\notag\\
&\quad\null+\sum_{i=1}^{n-1}\left(D_iD_iD_n\log u(y)-D_iD_iD_n\log u(x)\right)\notag\\
&\quad\null + 2\left(D_n\log u(y)D_nD_n\log u(y)-D_n\log u(x)D_nD_n\log u(x)\right)\notag\\
&\quad\null+2\sum_{i=1}^{n-2}\left(D_i\log u(y)D_iD_n\log u(y)-D_i\log u(x)D_iD_n\log u(x)\right)\notag\\
&\quad\null-\left(\nabla V(y)-\nabla V(x)\right)\cdot\frac{y-x}{|y-x|}-2\frac{\partial\psi}{\partial t}-C\ep\E^{Ct}\notag
\end{align}
In the first line we use the second derivative inequality \eqref{eq:D2Znn}, and in the second we use \eqref{eq:D2Zii}.  In the third line we rewrite the second derivatives of $\log u$ using \eqref{eq:D1Zxn} and \eqref{eq:D1Zyn}, while in the fourth we use \eqref{eq:D1Zxi} and \eqref{eq:D1Zyi}.  In the last line we use the modulus of convexity assumption \eqref{V-convexity}.  This yields
\begin{align}
0&\leq 2\psi''+2\psi'\left(D_n\log u(y)-D_n\log u(x)\right)\notag\\
&\quad\null-\frac{2}{|y-x|}\sum_{i=1}^{n-1}\left|D_i\log u(y)-D_i\log u(x)\right|^2-2\tilde V'-2\frac{\partial\psi}{\partial t}-C\ep\E^{Ct}\notag.\\
&\leq 2\psi'' +2\psi'(2\psi+\ep\E^{Ct})-2\tilde V'-2\frac{\partial\psi}{\partial t}-C\ep\E^{Ct}\notag\\
&< 2\left(\psi''+2\psi\psi'-\tilde V'-\frac{\partial\psi}{\partial t}\right)\leq 0.\notag
\end{align}
Here the second inequality is obtained by discarding the negative term involving $D_i\log u$, and using $Z_\ep=D_n\log u(y)-D_n\log u(y)-2\psi-\ep\E^{Ct}=0$. The last inequality is from equation \eqref{eq:dtpsi}, and the strict inequality is produced by choosing $C>2\sup_{[0,D/2]\times[0,T]}\psi'$ (this is independent of $\ep$ as required).   The result contradicts the assumption that $Z_\ep$ does not remain negative.  Therefore $Z_\ep<0$ on $\hat\Omega\times[0,T]$ for every $\ep>0$ and every $T<\infty$.  It follows that $Z\leq 0$ on $\hat\Omega\times\RR_+$, and the Theorem is proved.
\end{proof}

We next consider the long term behaviour of solutions of \eqref{eq:dtpsi}:
\begin{corollary}\label{cor:lim.beh}
Under the conditions of Theorem \eqref{thm:log conc}, there exists a smooth function $\psi:\ [0,D/2)\times\RR_+$, decreasing in the second argument, such that 
$$
\left(\nabla\ln u(y,t)-\nabla\ln u(x,t)\right)\cdot \frac{y-x}{|y-x|}\leq 2\psi\left(\frac{|y-x|}{2},t\right)
$$
for all $x,y\in\Omega$ with $x\neq y$ and all $t\geq 0$, and $\lim_{t\to\infty}\psi(z,t)=(\log\tilde\phi_0)'(z)$ for every $z\in[0,D/2)$.
\end{corollary}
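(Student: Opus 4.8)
The plan is to realise $\psi$ as the logarithmic $z$-derivative of a solution of the one-dimensional heat flow for $-\frac{d^2}{dz^2}+\tilde V$, and then to analyse its large-time behaviour. Write $\psi_\infty:=(\log\tilde\phi_0)'$, with $\tilde\phi_0$ the first Dirichlet eigenfunction of \eqref{evalue problem 2}. Differentiating $(\log\tilde\phi_0)''+\big((\log\tilde\phi_0)'\big)^2=\tilde V-\mu_0$ in $z$ shows $\psi_\infty$ is a time-independent solution of the evolution in \eqref{eq:dtpsi} with equality; it is smooth on $[0,D/2)$, has $\psi_\infty(0)=\psi_\infty''(0)=0$ since $\tilde\phi_0$ is even, and $\psi_\infty(z)\to-\infty$ as $z\to D/2$. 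I would then take $w$ to solve $\partial_tw=w''-\tilde V w$ on $[-D/2,D/2]\times\RR_+$ with Dirichlet conditions and even, positive, smooth initial data $w_0$ vanishing to first order at $\pm D/2$, and set $\psi(z,t):=(\log w(z,t))'$. Since $w$ stays even, $\psi(0,t)=0$, and $\psi$ solves the evolution in \eqref{eq:dtpsi} with equality; so, provided $\psi_0:=(\log w_0)'$ is a modulus of concavity for $\log u_0$, Theorem \ref{thm:log conc} applies and yields exactly the asserted inequality. The blow-up of $\psi$ (and of the associated $\omega$-type quantities) at $z=D/2$ I would absorb, as in the proof of Proposition \ref{prop:specgap}, by first replacing $w_0$ by a Robin datum that stays positive at $D/2$, running the argument, and letting $\ep\to0$.

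To get monotonicity in $t$, I would choose $w_0$ so that in addition $\psi_0$ is a supersolution of the stationary equation, $\psi_0''+2\psi_0\psi_0'\leq\tilde V'$ on $(0,D/2)$. Then the time-independent function $\psi_0$ is a supersolution of the parabolic equation in \eqref{eq:dtpsi}, equal to $\psi$ at $t=0$ and at $z=0$, with both tending to $-\infty$ at $z=D/2$; the comparison principle (again with the Robin regularisation at the singular end) then gives $\psi(\cdot,t)\leq\psi_0$ for every $t\geq0$. Since $\psi(\cdot,s)\leq\psi(\cdot,0)=\psi_0$ for each $s\geq0$, order-preservation of the flow yields $\psi(\cdot,s+t)\leq\psi(\cdot,t)$ for all $s,t\geq0$, so $t\mapsto\psi(\cdot,t)$ is non-increasing.

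For the limit, expand $w_0=\sum_{j\geq0}a_j\tilde\phi_j$ in Dirichlet eigenfunctions of \eqref{evalue problem 2}; then $a_0=\langle w_0,\tilde\phi_0\rangle/\|\tilde\phi_0\|^2>0$ because $w_0>0$ and $\tilde\phi_0>0$, so $\E^{\mu_0t}w(\cdot,t)\to a_0\tilde\phi_0$, with convergence in $C^1_{\mathrm{loc}}\big((-D/2,D/2)\big)$ by parabolic regularity. Hence, for each fixed $z\in[0,D/2)$,
$$\psi(z,t)=\frac{\big(\E^{\mu_0t}w(\cdot,t)\big)'(z)}{\big(\E^{\mu_0t}w(\cdot,t)\big)(z)}\;\longrightarrow\;\frac{(a_0\tilde\phi_0)'(z)}{a_0\tilde\phi_0(z)}=(\log\tilde\phi_0)'(z)=\psi_\infty(z)\qquad(t\to\infty).$$
If $w_0$ is moreover chosen with $\psi_0\geq\psi_\infty$ — compatible with the preceding demands, since $\psi_\infty\leq0$ near $z=0$ while both $\psi_\infty$ and the optimal modulus of concavity of $\log u_0$ have leading singularity $-\tfrac1{D/2-z}$ at $z=D/2$ — then comparison with the stationary solution $\psi_\infty$ gives $\psi(\cdot,t)\geq\psi_\infty$ for all $t$; together with the monotonicity this forces $\psi(z,t)\downarrow\psi_\infty(z)$, as claimed.

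The hard part will be the construction of the initial datum, i.e. producing $w_0$ (equivalently $\psi_0$) that is simultaneously a modulus of concavity for $\log u_0$, a supersolution of $\psi''+2\psi\psi'=\tilde V'$, bounded below by $\psi_\infty$, and has $w_0$ vanishing simply at $\pm D/2$ (so that the heat flow relaxes to the Dirichlet ground state, not a Neumann one). I would seek $\psi_0=\psi_\infty+\rho$ with $\rho\geq0$, $\rho(0)=0$ and $\rho$ concave at the origin: near $z=0$ the modulus-of-concavity requirement forces $\rho'(0)\geq\sup_\Omega\lambda_{\max}(D^2\log u_0)-\psi_\infty'(0)$, while the supersolution inequality at $z=0$ only demands $\rho''(0)\leq0$ (using $\psi_\infty''(0)=0$ and $\tilde V'(0)=0$), so these coexist; near $z=D/2$ the common leading singularity of $\psi_\infty$ and of the optimal modulus keeps $\rho$ bounded; and on compact subsets of $(0,D/2)$ the smoothness of $u_0$ and of $\tilde V'$ leaves room to interpolate while preserving the supersolution inequality. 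Verifying this, and justifying the comparison and maximum-principle steps up to the singular endpoint $z=D/2$ (handled throughout by the Robin approximation of Proposition \ref{prop:specgap}), is where the real work lies.
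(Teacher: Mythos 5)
Your overall strategy coincides with the paper's: evolve a modulus of concavity by the one--dimensional nonlinear flow $\partial_t\psi=\psi''+2\psi\psi'-\tilde V'$ (equivalently, the logarithmic derivative of a 1-D heat flow for $-\frac{d^2}{dz^2}+\tilde V$), feed it into Theorem \ref{thm:log conc}, obtain monotonicity in $t$ by starting from a stationary supersolution, regularize the singular endpoint $z=D/2$ by Robin data, and identify the $t\to\infty$ limit with $(\log\tilde\phi_0)'$. The monotonicity and limit-identification steps you sketch are sound and parallel the paper (which phrases them via comparison with barriers, the strong maximum principle, and a monotone-in-$k$ minimum to pass from the Robin problems to the Dirichlet limit).

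The genuine gap is the step you explicitly defer: producing initial data that is simultaneously a modulus of concavity for $\log u_0$, a supersolution of $\psi''+2\psi\psi'\leq\tilde V'$, bounded below by $\psi_\infty=(\log\tilde\phi_0)'$, and compatible with the endpoint regularization. This is not a routine verification; it is the substantive content of the paper's proof. The paper achieves it with exact stationary solutions of the Riccati equation $\psi'+\psi^2=\tilde V-\mu$ at shifted parameters, $\psi^L_{\mu_{0,1/k}-s}$ (with $\psi(0)=0$) and $\psi^R_{k,\mu_{0,1/k}+s}$ (with $\psi(D/2)=-k$), whose minimum $\psi^+_{k,s}$ is automatically a supersolution lying above $(\log\tilde\phi_{0,1/k})'$, and it then uses Lemmas \ref{lem:logubdry} and \ref{lem:bdry} together with the explicit bound \eqref{eq:lowerbdbarrier} to show that for $s$ large this minimum dominates the concavity behaviour of $\log u_0$: the steep $\tanh$ branch near $z=0$ beats the interior Hessian bound $N$, while the $\tan$ branch near $z=D/2$, combined with Lemma \ref{lem:bdry}, handles near-boundary pairs down to the level $-2k$ that the finite-$k$ problem requires. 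Your ansatz $\psi_0=\psi_\infty+\rho$ runs into exactly the difficulties this construction avoids: the supersolution inequality for $\psi_\infty+\rho$ contains the cross terms $2(\rho\,\psi_\infty'+\rho'\,\psi_\infty)$, which are singular as $z\to D/2$ where $\psi_\infty\sim-(D/2-z)^{-1}$, so "interpolating while preserving the supersolution inequality" is not obviously possible; and your claim that the optimal modulus of concavity of $\log u_0$ has the same leading singularity as $\psi_\infty$ (so that $\rho$ stays bounded) is asserted rather than proved, and would require quantitative control of $\nabla\log u_0\cdot\frac{y-x}{|y-x|}$ for near-diametral pairs that the paper never needs, precisely because its Robin-regularized problems are finite at $z=D/2$. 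Finally, the interchange of the Robin limit with the $t\to\infty$ limit also needs an argument (the paper gets it from monotonicity in $k$ plus local uniform estimates, giving smoothness of $\psi=\min_k\psi_k$ on $[0,D/2)$ and its convergence to $(\log\tilde\phi_0)'$); until these points are carried out, the proof is incomplete at its crux.
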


\begin{proof}
We construct a sequence of functions $\psi_k$ satisfying the assumptions of Theorem \ref{thm:log conc} and decreasing monotonically towards the (non-smooth) function $\psi$ as $k\to\infty$.  These are constructed by solving the equality case of \eqref{eq:dtpsi} with $\psi(D/2,t)=-k$ for suitable initial data.

The first step is to construct suitable initial functions $\psi_{k,0}$.   To do this we observe that there are many stationary solutions, corresponding to solutions of the eigenvalue equation via the Pr\"ufer transformation:  Stationary solutions satisfy
$0 = \psi''+2\psi\psi'-\tilde V'= (\psi'+\psi^2-\tilde V)'$,
and hence 
\begin{equation}\label{eq:Prufer}
\psi'+\psi^2=\tilde V-\mu
\end{equation}
for some $\mu$.  We consider solutions $\psi^L_\mu$ and $\psi^R_{k,\mu}$ of \eqref{eq:Prufer} satisfying $\psi^L_\mu(0)=0$ and $\psi^R_{k,\mu}(D/2)=-k$.  These relate to the Robin eigenfunctions $\tilde\phi_{0,\ep}$ satisfying equation \eqref{eq:phi0ep} which were constructed in the proof of Proposition \ref{prop:specgap}, with $\ep=1/k$:  In particular $\psi^L_{\mu_{0,1/k}}=\psi^R_{k,\mu_{0,1/k}}=(\log\tilde\phi_{0,1/k})'$.  The ODE comparison theorem applied to equation \eqref{eq:Prufer} implies that $\psi^L_{\mu}$ is strictly decreasing in $\mu$ for $0<z\leq D/2$, and $\psi^R_{k,\mu}$ is strictly increasing in $\mu$ for $0\leq z<D/2$.  In particular, $\psi^R_{k,\mu}>(\log\tilde\phi_{0,1/k})'$ for $\mu>\mu_{0,1/k}$ and $0\leq z<D/2$, and $\psi^L_{\mu}>(\log\tilde\phi_{0,1/k})'$ for $\mu<\mu_{0,1/k}$ and $0<z\leq D/2$. 
ODE comparison also gives upper bounds:  If $\lambda_+^2\geq\sup\tilde V-\mu$ and $\lambda_-^2\geq \mu-\inf\tilde V$ then
\begin{align*}
\psi^L_\mu(z)&\leq \lambda_+\tanh(\lambda_+ z);\qquad\text{and}\\
\psi^R_{k,\mu}(z)&\leq \frac{\lambda_-\tan(\lambda_-(D/2-z))-k}{1+\frac{k}{\lambda_-}\tan(\lambda_-(D/2-z))},\text{\ if\ }z>\frac{D}{2}-\frac{\frac{\pi}{2}+\arctan\left(\frac{k}{\lambda_-}\right)}{\lambda_-}.
\end{align*}
The upper and lower bounds imply existence of a supersolution
$\psi^+_{k,s}=\min\{\psi^L_{\mu_{0,1/k}-s},\psi^R_{k,\mu_{0,1/k}+s}\}$ for any $s\geq 0$.  Comparison gives lower bounds on these for large $s$:  For $s>\max\{\mu_{0,1/k}-\inf\tilde V,\sup\tilde V-\mu_{0,1/k}\}$,
\begin{equation}\label{eq:lowerbdbarrier}
\psi^+_{k,s}(z)\geq \begin{cases}
\tilde\lambda_+\tanh(\tilde\lambda_+z), &0\leq z\leq z_0;\\
\frac{\tilde\lambda_-\tan(\tilde\lambda_-(D/2-z))-k}{1+\frac{k}{\tilde\lambda_-}\tan(\tilde\lambda_-(D/2-z))}, &z_0\leq z\leq \frac{D}{2}
\end{cases}
\end{equation}
where $\tilde\lambda_+=\sqrt{s+\inf\tilde V-\mu_{0,1/k}}$, $\tilde\lambda_-=\sqrt{s+\mu_{0,1/k}-\sup\tilde V}$, and we take $z_0>\frac{D}{2}-\tilde\lambda_-^{-1}\left(\frac{\pi}{2}+\arctan\left(\frac{k}{\tilde\lambda_-}\right)\right)$ to make the two cases equal.

\begin{lemma}
For $u_0$ as in Theorem \ref{thm:log conc},
for each $k$ there exists $s(k)\geq 0$ such that 
$\psi^+_{k,s}$ is a modulus of concavity for $\log u_0$.
\end{lemma}

\begin{proof}
By Lemma \ref{lem:logubdry}, we have for all $(y,x)\in\hat\Omega$
\begin{equation*}
\left(\nabla\log u_0(y)-\nabla\log u_0(x)\right)\cdot\frac{y-x}{|y-x|}
\leq N|y-x|\leq 2\lambda_1\tanh\left(\frac{\lambda_1|y-x|}{2}\right),
\end{equation*}
where $\lambda_1$ is such that $ND\leq 2\lambda_1\tanh(\lambda_1 D/2)$.
Now apply Lemma \ref{lem:bdry} with $\psi(z)=-\frac{6kz}{D}$ and $\beta=k$ to find an open set $U\subset\RR^2$ containing $\partial\hat\Omega$ such that
$$
\left(\nabla\log u_0(y)-\nabla\log u_0(x)\right)\cdot\frac{y-x}{|y-x|}\leq -\frac{6k|y-x|}{D}+k
$$
for $(x,y)\in\hat\Omega\cap U$.  In particular there exists $\delta>0$ such that $U$ contains all points $(x,y)\in\hat\Omega$ with $|y-x|\geq D-\delta$.  Decreasing $\delta$ so that $\delta<D/3$ if necessary, we have that for all $(x,y)\in\hat\Omega$ with $|y-x|\geq D-\delta$,
\begin{align*}
\left(\nabla\log u_0(y)-\nabla\log u_0(x)\right)\cdot\frac{y-x}{|y-x|}&\leq -\frac{6k(D-\delta)}{D}+k\\
&\leq-2k\\
&\leq 2\frac{\lambda_2\tan\left(\lambda_2\left(\frac{D-|y-x|}{2}\right)\right)-k}{1+\frac{k}{\lambda_2}\tan(\lambda_2(\frac{D-|y-x|}{2}))}
\end{align*}
for any $\lambda_2>0$, provided $\frac{D-|y-x|}{2}<\frac{\frac{\pi}{2}+\arctan\left(\frac{k}{\lambda_2}\right)}{\lambda_2}$.  We choose $\lambda_2$ so large that
$\frac{\pi}{2}+\arctan\left(\frac{k}{\lambda_2}\right)<\delta\lambda_2$.  Then by \eqref{eq:lowerbdbarrier} we have for all $(x,y)\in\hat\Omega$
$$
\left(\nabla\log u_0(y)-\nabla\log u_0(x)\right)\cdot\frac{y-x}{|y-x|}\leq \psi^+_{k,s}\left(\frac{|y-x|}{2}\right)
$$
 provided $s\geq \max\{\lambda_1^2+\mu_{0,1/k}-\inf\tilde V,\lambda_2^2+\sup\tilde V-\mu_{0,1/k}\}$.
\end{proof}

Now we proceed to construct the solutions $\psi_k$:  We define
$$
s(k)=\inf\{s\geq 0:\ \psi^+_{k,s}\text{\ is\ a\ modulus\ of\ concavity\ for\ }\log u_0\}.
$$
Then we choose the initial data as follows:
$$
\psi_{k,0} = \min\left\{\psi^+_{j,s(j)}:\ 1\leq j\leq k\right\}.
$$
Note that $\psi_{k,0}$ is pointwise non-increasing in $k$.  Now define $\psi_k(z,t)$ by
\begin{equation}\label{eq:dtpsik}
\left.\begin{aligned}
\frac{\partial\psi_k}{\partial t}= \psi_k''+2\psi_k\psi_k'-\tilde V'&\quad\text{on\ }[0,D/2]\times(0,\infty)\\
\psi_k(.,0)=\psi_{k,0}(.)&\\
\psi_k(0,t)=0&\\
\psi_k(D/2,t)&=-k.
\end{aligned}\right\}
\end{equation}
The solution $\psi_k$ remains between the supersolution $\psi_{k,0}$ and the subsolution $(\log\tilde\phi_{0,1/k})'$, and is decreasing in $t$ and in $k$.  The inequality $\frac{\partial\psi_k}{\partial t}\leq 0$ implies bounds on $\psi_k'$ (since the barriers bound $\psi_k'$ at the endpoints), and it follows that $\psi_k$ exists and is Lipschitz in the first argument and continuous on $[0,D/2]\times\RR_+$, and smooth on $[0,D/2]\times(0,\infty)$.  By the strong maximum principle, $\psi_k(z,t)$ is strictly decreasing in $t$ on $(0,D/2)\times\RR_+$ unless $\psi_k=(\log\tilde\phi_{0,1/k})'$.  It follows that for each $k$, $\psi_k(.,t)$ converges in $C^\infty$ to $(
\log\tilde\phi_{0,1/k})'$ as $t\to\infty$.  By Theorem \ref{thm:log conc}, $\psi_{k,t}$ is a modulus of concavity for $\log u(.,t)$ for each $t>0$ and each $k\in\NN$, and therefore $\psi(.,t)=\lim_{k\to\infty}\psi_k(.,t)=\min_{k\in\NN}\psi_k(.,t)$ is also.  Since $\psi_k$ is bounded independent of $k$ in $C^q$ for every $q$ on compact subsets of $[0,D/2)\times(0,\infty)$, $\psi$ is a smooth solution of 
\begin{equation}\label{eq:dtpsiinfty}
\left.\begin{aligned}
\frac{\partial\psi}{\partial t}= \psi''+2\psi\psi'-\tilde V'&\quad\text{on\ }[0,D/2)\times(0,\infty)\\
\psi(0,t)=0&\\
\lim_{z\to D/2}\psi(z,t)&=-\infty,
\end{aligned}\right\}
\end{equation}
and $\psi(.,t)$ converges in $C^\infty$ to $(\log\tilde\phi_0)'$ as $t\to\infty$ on compact subsets of $[0,D/2)$.  This completes the proof.
\end{proof}

\begin{proof}[Proof of Theorem \ref{log concavity}]
Theorem \ref{log concavity} follows by applying Corollary \ref{cor:lim.beh} to the particular solution $u(x,t) = \phi_0(x)\E^{-\lambda_1t}$ of \eqref{eq:Par.Schrod}.  In this case 
$\nabla\log u$ is independent of $t$.  Therefore $\psi(.,t)$ is a modulus of concavity for $\log u_0$ for every $t$, and hence $(\log\tilde\phi_0)'=\inf_{t\geq 0}\psi(.,t)$ is also.
\end{proof}

\section{Examples and extensions} \label{examples section}

When $V$ is convex, 
Theorem \ref{spectral gap} gives Corollary \ref{convex corollary}, since then $0$ is a modulus of convexity for $V$, so we can choose $\tilde V=0$ and the first two eigenfunctions are $\tilde\phi_0(z)=\cos\left(\frac{\pi z}{D}\right)$ with $\mu_0=\frac{\pi^2}{D^2}$, and 
$\tilde\phi_1(z)=\cos\left(\frac{2\pi z}{D}\right)$ with $\mu_1=\frac{4\pi^2}{D^2}$, and hence
$\lambda_1-\lambda_0\geq \mu_1-\mu_0=\frac{3\pi^2}{D^3}$.

A positive lower bound on the Hessian of the potential implies stronger estimates on the gap since a particular modulus of convexity follows:
\begin{align*}
\left(DV(y)-DV(x)\right)\cdot\frac{y-x}{|y-x|} 
&= \frac{1}{|y-x|}\int_0^1 D^2V\big|_{(1-s)x+sy}(y-x,y-x)\,ds\\
&\geq K|y-x|=2\tilde V'\left(\frac{|y-x|}{2}\right)
\end{align*}
where $\tilde V(z) = \frac{K}2z^2$ is the harmonic oscillator potential.   Thus the fundamental gap on $\Omega$ is bounded below by that of the harmonic oscillator on the interval $[-D/2,D/2]$.  This provides a sharp version of \cite{yau-gap}*{Theorem 1.2}.
The result also extends to entire domains:  Suppose $V$ is a potential on $\RR^n$ with $D^2V\geq K\delta_{ij}$.  Then the eigenvalues $\lambda_i$ on $B_R(0)$ converge to those on $\RR^n$, so the fundamental gap for this potential on $\RR^n$ is no less than that of the one-dimensional harmonic oscillator, which is $\sqrt{2K}$.  Equality will hold in the limit of potentials of the form $V=\frac{K}{2}\left(x_1^2+n\sum_{i=2}^nx_i^2\right)$ as $n\to\infty$.  

Our result also gives useful consequences for non-convex potentials:  Consider a one-dimensional potential $\tilde V$ which is even and has non-negative third derivatives.  A useful example is the double-well potential $\tilde V(z) = -az^2+bz^4$ with $a,b>0$.  Now consider higher-dimensional potentials of the form
$$
V(x) = \tilde V(|x|)+c\sum_{i=2}^n x_i^2.
$$
For large $c$ these are double well potentials which agree with $\tilde V$ along the $x_1$ axis.  One can check directly that $\tilde V'$ is a modulus of convexity for $V$ for any $c\geq 0$, and it follows that the fundamental gap for such a potential on a convex domain $\Omega$ of diameter $D$ is bounded below by that of $\tilde V$ on the interval $[-D/2,D/2]$.  Again, the result is sharp, with equality holding in the limit as $c\to\infty$ in the case where $\Omega$ contains the interval $(-D/2,D/2)$.

An extension of our methods yields sharp results for domains in constant curvature spaces, implying that the spectral gap for a convex domain with convex potential is at least as large as the gap for a one-dimensional comparison problem (Equation (1) in \cite{Kroger} with Dirichlet boundary conditions), with equality in the limit of domains which are small neighbourhoods of a geodesic segment.  We will provide the details elsewhere.

\begin{bibdiv}
\begin{biblist}
 
  \def\cprime{$'$}

\bib{andrews-clutterbuck-jde}{article}{
   author={Andrews, Ben},
   author={Clutterbuck, Julie},
   title={Lipschitz bounds for solutions of quasilinear parabolic equations
   in one space variable},
   journal={J. Differential Equations},
   volume={246},
   date={2009},
   number={11},
   pages={4268--4283}
   }

\bib{ac:gradient-estimates-hd}{article}{
   author={Andrews, Ben},
   author={Clutterbuck, Julie},
   title={Time-interior gradient estimates for quasilinear parabolic
   equations},
   journal={Indiana Univ. Math. J.},
   volume={58},
   date={2009},
   number={1},
   pages={351--380},
  }

\bib{Ashbaughsurvey}{article}{
	author={Ashbaugh, Mark S.},
	title={The Fundamental Gap},
	date={2006},
	eprint={http://www.aimath.org/WWN/loweigenvalues/}
}

\bib{ashbaugh-benguria-89}{article}{
   author={Ashbaugh, Mark S.},
   author={Benguria, Rafael},
   title={Optimal lower bound for the gap between the first two eigenvalues
   of one-dimensional Schr\"odinger operators with symmetric single-well
   potentials},
   journal={Proc. Amer. Math. Soc.},
   volume={105},
   date={1989},
   number={2},
   pages={419--424},
  }
  
\bib{kroger-banuelos}{article}{
   author={Ba{\~n}uelos, Rodrigo},
   author={Kr{\"o}ger, Pawel},
   title={Gradient estimates for the ground state Schr\"odinger
   eigenfunction and applications},
   journal={Comm. Math. Phys.},
   volume={224},
   date={2001},
   number={2},
   pages={545--550},
  }

\bib{mendez-banuelos}{article}{
  author={Ba{\~n}uelos, Rodrigo},
   author={ M{\'e}ndez-Hern{\'a}ndez, Pedro J.},
   title={Sharp inequalities for heat kernels of {S}chr\"odinger
              operators and applications to spectral gaps},
   journal={J. Funct. Anal.},
   volume={176},
   date={2000},
   number={2},
   pages={368--399},
  }

\bib{vandenBerg}{article}{
   author={van den Berg, M.},
   title={On condensation in the free-boson gas and the spectrum of the
   Laplacian},
   journal={J. Statist. Phys.},
   volume={31},
   date={1983},
   number={3},
   pages={623--637},
  }

\bib{MR0450480}{article}{
   author={Brascamp, Herm Jan},
   author={Lieb, Elliott H.},
   title={On extensions of the Brunn-Minkowski and Pr\'ekopa-Leindler
   theorems, including inequalities for log concave functions, and with an
   application to the diffusion equation},
   journal={J. Functional Analysis},
   volume={22},
   date={1976},
   number={4},
   pages={366--389},
   }

\bib{MR1821082}{article}{
   author={Davis, Burgess},
   title={On the spectral gap for fixed membranes},
   journal={Ark. Mat.},
   volume={39},
   date={2001},
   number={1},
   pages={65--74},
  }

\bib{trudinger}{book}{
   author={Gilbarg, David},
   author={Trudinger, Neil S.},
   title={Elliptic partial differential equations of second order},
   series={Classics in Mathematics},
   note={Reprint of the 1998 edition},
   publisher={Springer-Verlag},
   place={Berlin},
   date={2001},
   pages={xiv+517},
  }

\bib{Harrell}{article}{label={Ha},
   author={Harrell, Evans M.},
   title={Double wells},
   journal={Comm. Math. Phys.},
   volume={75},
   date={1980},
   number={3},
   pages={239--261},
  }
  
\bib{henrot-book}{book}{label={He},
   author={Henrot, Antoine},
   title={Extremum problems for eigenvalues of elliptic operators},
   series={Frontiers in Mathematics},
   publisher={Birkh\"auser Verlag},
   place={Basel},
   date={2006},
   pages={x+202},
  }

\bib{MR2512810}{book}{
   author={Henrot, Antoine},
   author={Pierre, Michel},
   title={Variation et optimisation de formes},
   series={Math\'ematiques \& Applications (Berlin)},
   volume={48},
   subtitle={Une analyse g\'eom\'etrique},
   publisher={Springer},
   place={Berlin},
   date={2005},
   }

\bib{MR1948113}{article}{label={Ho},
   author={Horv{\'a}th, Mikl{\'o}s},
   title={On the first two eigenvalues of Sturm-Liouville operators},
   journal={Proc. Amer. Math. Soc.},
   volume={131},
   date={2003},
   number={4},
   pages={1215--1224 (electronic)},
  }

\bib{korevaar-convex}{article}{label={Ko},
   author={Korevaar, Nicholas J.},
   title={Convex solutions to nonlinear elliptic and parabolic boundary
   value problems},
   journal={Indiana Univ. Math. J.},
   volume={32},
   date={1983},
   number={4},
   pages={603--614},
  }

\bib{Kroger}{article}{label={Kr},
   author={Kr{\"o}ger, Pawel},
   title={On the spectral gap for compact manifolds},
   journal={J. Differential Geom.},
   volume={36},
   date={1992},
   number={2},
   pages={315--330},
  }

\bib{Lavine-gap}{article}{label={La},
   author={Lavine, Richard},
   title={The eigenvalue gap for one-dimensional convex potentials},
   journal={Proc. Amer. Math. Soc.},
   volume={121},
   date={1994},
   number={3},
   pages={815--821},
   }

\bib{Li}{article}{label={Li},
   author={Li, Peter},
   title={A lower bound for the first eigenvalue of the Laplacian on a
   compact manifold},
   journal={Indiana Univ. Math. J.},
   volume={28},
   date={1979},
   number={6},
   pages={1013--1019},
  }

\bib{Li-Yau}{article}{
   author={Li, Peter},
   author={Yau, Shing Tung},
   title={Estimates of eigenvalues of a compact Riemannian manifold},
   conference={
      title={Geometry of the Laplace operator},
   },
   book={
      series={Proc. Sympos. Pure Math., XXXVI},
      publisher={Amer. Math. Soc.},
      place={Providence, R.I.},
   },
   date={1980},
   pages={205--239},
   }

\bib{ling-gap}{article}{
   author={Ling, Jun},
   title={Estimates on the lower bound of the first gap},
   journal={Comm. Anal. Geom.},
   volume={16},
   date={2008},
   number={3},
   pages={539--563},
  }

\bib{PW}{article}{
   author={Payne, L. E.},
   author={Weinberger, H. F.},
   title={An optimal Poincar\'e inequality for convex domains},
   journal={Arch. Rational Mech. Anal.},
   volume={5},
   date={1960},
   pages={286--292 (1960)},
  }
  
\bib{swyy-gap-estimate}{article}{
   author={Singer, I. M.},
   author={Wong, Bun},
   author={Yau, Shing-Tung},
   author={Yau, Stephen S.-T.},
   title={An estimate of the gap of the first two eigenvalues in the
   Schr\"odinger operator},
   journal={Ann. Scuola Norm. Sup. Pisa Cl. Sci. (4)},
   volume={12},
   date={1985},
   number={2},
   pages={319--333},
  }

\bib{smits}{article}{
   author={Smits, Robert G.},
   title={Spectral gaps and rates to equilibrium for diffusions in convex
   domains},
   journal={Michigan Math. J.},
   volume={43},
   date={1996},
   number={1},
   pages={141--157},
 }

\bib{yau-nonconvex-gap}{article}{
   author={Yau, Shing-Tung},
   title={Gap of the first two eigenvalues of the Schr\"odinger operator
   with nonconvex potential},
   journal={Mat. Contemp.},
   volume={35},
   date={2008},
   pages={267--285},
  }

\bib{MR865650}{book}{
   author={Yau, Shing-Tung},
   title={Nonlinear analysis in geometry},
   series={Monographies de L'Enseignement Math\'ematique},
   volume={33},
   note={S\'erie des Conf\'erences de l'Union Math\'ematique Internationale, 8},
   publisher={L'Enseignement Math\'ematique},
   place={Geneva},
   date={1986},
   pages={54},
   }
   
\bib{yau-gap}{article}{
   author={Yau, Shing-Tung},
   title={An estimate of the gap of the first two eigenvalues in the
   Schr\"odinger operator},
   conference={
      title={Lectures on partial differential equations},
   },
   book={
      series={New Stud. Adv. Math.},
      volume={2},
      publisher={Int. Press, Somerville, MA},
   },
   date={2003},
   pages={223--235},
   }

\bib{yu-zhong-gap}{article}{
   author={Yu, Qi Huang},
   author={Zhong, Jia Qing},
   title={Lower bounds of the gap between the first and second eigenvalues
   of the Schr\"odinger operator},
   journal={Trans. Amer. Math. Soc.},
   volume={294},
   date={1986},
   number={1},
   pages={341--349},
   }

\bib{zhong-yang}{article}{
   author={Zhong, Jia Qing},
   author={Yang, Hong Cang},
   title={On the estimate of the first eigenvalue of a compact Riemannian
   manifold},
   journal={Sci. Sinica Ser. A},
   volume={27},
   date={1984},
   number={12},
   pages={1265--1273},
   }

\end{biblist}
\end{bibdiv}

\end{document}